\newtheorem{theorem}{Theorem}[section]
\newtheorem{proposition}[theorem]{Proposition}
\newtheorem{problem}{Problem}
\newtheorem{corollary}[theorem]{Corollary}
\theoremstyle{definition}
\newtheorem{definition}[theorem]{Definition}
\theoremstyle{remark}
\newtheorem{remark}[theorem]{Remark}
\newcommand\remove[1]{}
\def\mA{\mathcal{A}}
\def\mB{\mathcal{B}}
\def\mm{\mathcal{M}}
\def\mn{\mathcal{N}}
\def\mf{\mathcal{F}}
\def\ms{\mathcal{S}}
\def\mcp{\mathcal{P}}
\def\mr{\mathcal{R}}
\def\mt{\mathcal{T}}
\def\mju{\mathcal{U}}
\def\f2{\mathbb{F}_2}
\def\lip{\hskip0.02cm{\rm Lip}\hskip0.01cm}
\newcommand{\ep}{\varepsilon}
\newcommand{\lin}{{\rm lin}\hskip0.02cm}
\newcommand{\acv}{\mathfrak{a}}
\newcommand{\lp}[1]{\left( #1 \right)}
\newcommand{\lc}[1]{\left\{ #1 \right\}}
\newcommand{\av}[1]{\left| #1 \right|}
\newcommand{\nm}[1]{\left\| #1 \right\|}
\newcommand{\ve}{\varepsilon}
\newcommand{\ds}{\displaystyle}
\begin{document}

\title{Finite determination for embeddings into Banach spaces: new proof, low distortion}

\author{Florin Catrina and Mikhail~I.~Ostrovskii}

\date{\today}
\maketitle


\abstract{ The main goal of this paper is to improve the result of
Ostrovskii (2012) on the finite determination of bilipschitz and
coarse embeddability of locally finite metric spaces into Banach
spaces.

There are two directions of the improvement:

(1) Substantial decrease of distortion (from about $3000$ to
$3+\ep$) is achieved by replacing the barycentric gluing by the
logarithmic spiral gluing. This decrease in the distortion is
particularly important when the finite determination is applied to
construction of embeddings.

(2) Simplification of the proof: a collection of tricks employed
in Ostrovskii (2012) is no longer needed, in addition to the
logarithmic spiral gluing we use only the Brunel-Sucheston result
on existence of spreading models.}

\section{Introduction}

\subsection{Definitions and history}

We consider classes $\mathcal{E}$ of embeddings of metric spaces
for which the notion of {\it $\mathcal{E}$-uniform embeddings} is
well defined and satisfies the condition: all restrictions of an
embedding $\phi\in\mathcal{E}$ are $\mathcal{E}$-uniform
embeddings. (We use {\it$\mathcal{E}$-embedding} instead of
{\it embedding in $\mathcal{E}$}, for short.)

Let $\mathbf{A}$ and $\mathbf{B}$ be two classes of metric spaces.
We say that $\mathcal{E}$-embeddability of spaces of  $\mathbf{A}$
into spaces of $\mathbf{B}$ is {\it finitely determined} if the
existence of an $\mathcal{E}$-embedding of $U\in \mathbf{A}$ into
$Z\in \mathbf{B}$ is equivalent to the condition that all finite
metric subspaces of  $U$ admit $\mathcal{E}$-uniform embeddings
into $Z$.

The notion of $\mathcal{E}$-uniform embeddings is well defined for
bilipschitz and coarse embeddings. We remind their definitions:

\begin{definition}\label{D:BilipCoarse}
Let $0<C<\infty$. A map $f: (A,d_A)\to (Y,d_Y)$ between two metric
spaces is called $C$-{\it Lipschitz}\label{lip} if \[\forall
u,v\in A\quad d_Y(f(u),f(v))\le Cd_A(u,v).\] A map $f$ is called
{\it Lipschitz} if it is $C$-Lipschitz for some $C<\infty$. For a
Lipschitz map $f$ we define its {\it Lipschitz
constant}\label{lipC} by
\[\lip f:=\sup_{d_A(u,v)\ne 0}\frac{d_Y(f(u),
f(v))}{d_A(u,v)}.\label{lipCE}\]

A map $f:A\to Y$ is called a {\it $C$-bilipschitz embedding} if
there exists $r>0$ such that
\begin{equation}\label{E:MapDist}\forall u,v\in A\quad rd_A(u,v)\le
d_Y(f(u),f(v))\le rCd_A(u,v).\end{equation} A {\it bilipschitz
embedding} is an embedding which is $C$-bilipschitz for some
$C<\infty$. The smallest constant $C$ for which there exists $r>0$
such that \eqref{E:MapDist} is satisfied is called the {\it
distortion} of $f$. \smallskip

A map $f:(X,d_X)\to (Y,d_Y)$ between two metric spaces is called a
{\it coarse embedding} if there exist non-decreasing functions
$\rho_1,\rho_2:[0,\infty)\to[0,\infty)$ (observe that this
condition implies that $\rho_1$ and $\rho_2$ have finite values) such that
$\lim_{t\to\infty}\rho_1(t)=\infty$ and
\begin{equation}\label{E:coarse}\forall u,v\in X~ \rho_1(d_X(u,v))\le
d_Y(f(u),f(v))\le\rho_2(d_X(u,v)).\end{equation}
\end{definition}

The corresponding notions of $\mathcal{E}$-uniform embeddings are
families of bilipschitz embeddings with uniformly bounded
distortions and families of coarse embeddings satisfying the
inequality \eqref{E:coarse} with the same functions
$\rho_1,\rho_2:[0,\infty)\to[0,\infty)$ for all members of the
family.

Recall that a metric space is called {\it locally finite} if each
ball of finite radius in it contains finitely many elements. Our
sources for Banach Spaces and Metric Embeddings are \cite{BL00}
and \cite{Ost13}. All normed vector spaces considered in this
paper are over the reals.

The main goal of this paper is to improve the result of
\cite{Ost12} (see Theorem \ref{T:FinDet} below) on the finite
determination of bilipschitz and coarse embeddability in cases
where $\mathbf{B}$ is the class of all Banach spaces and
$\mathbf{A}$ is the class of all locally finite metric spaces.

There are two directions of the improvement:

 (1) Substantial decrease of distortion (from
about $3000$ to $3+\ep$) is achieved by replacing the barycentric
gluing introduced in \cite{Bau07} by the logarithmic spiral gluing
introduced in \cite{OO19}. The decrease in the distortion is
particularly important because the finite determination is applied
(see \cite{NY18, NY22}) to construction of embeddings.

(2) Simplification of the proof: a collection of tricks employed
in \cite{Ost12} is no longer needed, in addition to the
logarithmic spiral gluing we use only the Ramsey Theorem in the
same way as Brunel and Sucheston in \cite{BS74}.

\begin{remark} The restriction that ${\bf A}$ is the class of
locally finite metric space is essential. It is well known, for
example, that all finite subsets of $\ell_2$ admit bilipschitz
embeddings into $\ell_p$ $(1\le p\le\infty)$ with uniformly
bounded distortions (it is an immediate consequence of the
Dvoretzky theorem), but $\ell_2$ does not admit a bilipschitz
embedding into $\ell_p$ $(1\le p<\infty, p\neq 2)$; see \cite[Corollary
7.10]{BL00}.
\end{remark}

Now we mention some of the steps in the development of the theory
of finite determination (although we try to mention all
contributions, we do not claim that our list is exhaustive).

\begin{enumerate}[{\bf (1)}]

\item Coarse embeddability of the class of locally finite metric
spaces into the class of Banach spaces with trivial cotype is
finitely determined \cite{Ost06}. (This means that any locally
finite metric space admits a coarse embedding into any Banach
space with trivial cotype.)

\item Bilipschitz embeddability of the infinite binary tree into
the class of all Banach spaces is finitely determined
\cite{Bau07} (based on \cite{Bou86}).

\item Bilipschitz embeddability of the class of locally finite
metric spaces into the class of Banach spaces with trivial cotype
is finitely determined \cite{BL08}. (This means that any locally
finite metric space admits a bilipschitz embedding into any Banach
space with trivial cotype.)

\item Bilipschitz embeddability of the class of locally finite
subspaces of a Hilbert space into the class of all Banach spaces
is finitely determined \cite{Ost09}. (By the Dvoretzky theorem it
means that any locally finite metric subspace of a Hilbert space
admits a bilipschitz embedding into an arbitrary infinite-dimensional Banach space.)

\item Bilipschitz embeddability of the class of locally finite
subspaces of $\mathcal{L}_p$-spaces into the class of all Banach
spaces in which the $\ell_p$ space is finitely representable is finitely determined
\cite{Bau12}.

\end{enumerate}

The final result in this direction is the following:

\begin{theorem}[\cite{Ost12}]\label{T:FinDet} Both coarse and bilipschitz
embeddability of locally finite metric spaces into Banach spaces
are  finitely determined.
\end{theorem}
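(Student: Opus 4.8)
The plan is as follows. Fix a locally finite metric space $U$, a Banach space $Z$, a base point $x_0\in U$, and write $U=\bigcup_{n\ge 1}U_n$ where $U_n:=\{x\in U:\ d(x,x_0)\le 2^{n}\}$; since $U$ is locally finite each $U_n$ is finite, the $U_n$ are nested, and $\bigcup_nU_n=U$. By hypothesis every $U_n$ admits an $\mathcal E$-uniform embedding into $Z$; rescaling and translating, I may take maps $f_n\colon U_n\to Z$ with $f_n(x_0)=0$ and, in the bilipschitz case, $d(u,v)\le\|f_n(u)-f_n(v)\|\le D\,d(u,v)$ for all $u,v\in U_n$ (so $\|f_n(x)\|\le D\,d(x,x_0)$), with $D$ an upper bound for the distortions of the $U_n$; in the coarse case I keep $\rho_1(d(u,v))\le\|f_n(u)-f_n(v)\|\le\rho_2(d(u,v))$. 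If $\dim Z<\infty$ one is already done: for fixed $x$ the sequence $(f_n(x))_n$ is bounded, so a diagonal argument over the countable set $U$ yields a subsequence along which $f_n$ converges pointwise to a map $F\colon U\to Z$, and passing to the limit in the two-sided estimates shows that $F$ is the desired embedding. So I may assume $Z$ is infinite-dimensional.

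Next I would stabilise the family $(f_n)$ by a spreading-model argument. Applying the Ramsey theorem exactly as Brunel and Sucheston do in \cite{BS74} to the sequence of finite configurations $\big(f_n(u)\big)_{u\in U}$, I pass to a subsequence along which these configurations become asymptotically self-similar in the scale parameter $n$: I obtain a Banach space $E$ that is finitely representable in $Z$ and carries a $1$-spreading basis adapted to the annuli $U_n\setminus U_{n-1}$, together with a map $g\colon U\to E$ satisfying the same two-sided estimate as the $f_n$ (distortion $\le D$, resp. the same moduli $\rho_1,\rho_2$) and \emph{block-structured}, in the sense that there is an increasing sequence $(k_n)$ with $g(U_n)\subseteq\mathrm{span}(e_1,\dots,e_{k_n})$, so that each annulus is accounted for by boundedly many consecutive basis vectors and the shift-invariance of the spreading basis identifies the block carrying one annulus with an isometric ``shifted'' copy of the block carrying the previous one. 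This is the only use of the Ramsey theorem, and beyond the logarithmic spiral gluing nothing further is needed --- the promised simplification over \cite{Ost12}.

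It remains to transport $g$ from $E$ back into $Z$, which is where the logarithmic spiral gluing of \cite{OO19} enters. Since $E$ is finitely representable in $Z$, for each $n$ I choose a $(1+\ep_n)$-isomorphic linear embedding $\iota_n$ of $\mathrm{span}(e_1,\dots,e_{k_{n+1}})$ into $Z$ with $\ep_n\downarrow0$; using that $\dim Z=\infty$ together with the shift-invariance of the spreading basis, these can be chosen so that the consecutive realisations $\iota_{n-1}$ and $\iota_n$ are ``almost independent'' in $Z$ (their joint range being close to the direct sum dictated by $E$) --- exactly the configuration the spiral gluing is built to exploit. I then define $F\colon U\to Z$ by interpolating, over each annulus $U_n\setminus U_{n-1}$ and with the interpolation driven by the fractional part of the logarithmic scale $t(x)=\log_2 d(x,x_0)$ (hence along a logarithmic spiral rather than a straight segment), between $\iota_{n-1}(g(x))$ and $\iota_n(g(x))$. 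The quantitative estimates for logarithmic spirals then give that $F$ is bilipschitz with distortion at most roughly $3D$ (the extra $\ep$ coming from the $\ep_n$ and from ``almost'' rather than exact independence); in the coarse case the same construction yields new moduli $\rho_1',\rho_2'$ of the same qualitative type. Combining the steps --- and recalling that, by the Dvoretzky theorem, when $U$ is a locally finite subset of a Hilbert space the finite pieces embed into $Z$ with $D=1+\ep$ --- one reaches distortion $3+\ep$, and both assertions of the theorem follow.

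The main obstacle is the last step: arranging the logarithmic spiral gluing so that it produces a map into $Z$ \emph{itself} --- not into $E$, nor into an $\ell_\infty$- or $\ell_2$-sum of copies of $Z$, none of which need accommodate $U$ --- while keeping the distortion blow-up down to the factor $3$. This hinges on two points that must be made quantitative and reconciled: that consecutive blocks of the spreading model admit ``almost independent'' realisations inside $Z$ (which is precisely what the shift-invariance of a spreading basis supplies, and why an infinite-dimensional $Z$ is used), and the sharp geometry of logarithmic spirals, which is what replaces --- and improves by roughly three orders of magnitude on --- the barycentric gluing of \cite{Bau07, Ost12}. Tracking the parameters $\ep_n$ through the simultaneous choice of the $\iota_n$, and verifying the spiral distortion bound on pairs of points lying at very different scales, are the technical heart; once this is in place the coarse case is obtained by running the identical construction with $\rho_1,\rho_2$ carried along throughout.
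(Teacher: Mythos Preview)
Your plan conflates two distinct steps and, as a result, misplaces both the ultraproduct and the Brunel--Sucheston arguments.

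First, your Step~4 is not a Brunel--Sucheston step. BS applies to a bounded sequence of \emph{vectors} in a Banach space; it says nothing about a sequence of finite configurations $(f_n(u))_{u\in U_n}$ of growing cardinality, and there is no construction that turns such a sequence into a space $E$ with a $1$-spreading basis ``adapted to the annuli'' and a block-structured map $g:U\to E$. What actually produces your intermediate space is an ultraproduct: with $\mathcal U$ an ultrafilter on the index set, the map $u\mapsto (f_n(u))_n$ lands in the ultrapower $Z^{\mathcal U}$, which is finitely representable in $Z$, and inherits the two-sided bounds. This is the paper's Corollary~\ref{C:FDCoarse}. No spreading basis appears here, and none is needed.

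Second, once one has a locally finite set $\mathcal M'$ inside some $Y$ finitely representable in $Z$, the logarithmic spiral gluing is set up as you describe (interpolate between consecutive near-isometric linear embeddings $\Psi_i,\Psi_{i+1}$ of finite-dimensional spans into $Z$), but the lower bilipschitz bound does \emph{not} come from arranging the $\iota_n$ to be ``almost independent'' via shift-invariance; there is no such mechanism available inside an arbitrary $Z$. The genuine obstacle is that $\|\cos\tau\,\Psi_i(u)+\sin\tau\,\Psi_{i+1}(u)\|$ can be arbitrarily small for unrelated $\Psi_i,\Psi_{i+1}$. This is exactly where the paper applies Brunel--Sucheston: to each fixed unit vector $u$ one applies BS to the bounded sequence $(\psi_n(u))_n\subset Z$, extracts a BS-subsequence, and then uses the purely algebraic estimate of Proposition~\ref{P:Pab}, namely $L(\cos\tau,\sin\tau,0,\dots)\ge \sqrt{2}/3$, to force the needed lower bound after passing far enough along the subsequence. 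That $\sqrt{2}/3$ is precisely the source of the factor $3$; your sketch never isolates this computation, and without it the spiral estimate \eqref{E:DTest} has no useful left-hand side. So the plan, as written, would stall at the lower bound for $\|\Psi(x)-\Psi(y)\|$.
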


In all of the papers above no efforts were made to minimize the
difference between the assumptions for the finite subsets and the
outcome for the locally finite metric spaces.
For example, the result of \cite{Ost12} proves that embeddability of
finite subsets of a metric space $M$ into a Banach space $X$ with distortions
at most 5 implies the embeddability of $M$ into $X$ with distortion at most 15,000.
On the other hand, it is natural to expect that the gap in distortion does not have to be so big.
The first steps in the direction of lowering this gap were
made in the papers: \cite{OO19}, \cite{Ost22}, \cite{COO22+}.

Some
of the obtained results (we omit more technical ones):

\begin{enumerate}[{\bf (i)}]

\item For every $\ep>0$ any locally finite metric space admits a
$(4+\ep)$-bilipschitz embedding into any Banach space with trivial
cotype \cite{OO19}.

\item For every $\ep>0$ any locally finite metric subspace of
$L_p(0,1)$ admits a $(1+\ep)$-bilipschitz embedding into $\ell_p$
\cite{OO19}.

\item For every $\ep>0$ the infinite binary tree admits a
$(4+\ep)$-bilipschitz embedding into any nonsuperreflexive space
\cite{Ost22}.

\item For every $\ep>0$ any locally finite metric subspace of a
Hilbert space admits a bilipschitz embedding into an arbitrary
Banach space with distortion $\le(1+\ep)$ \cite{COO22+}.

\end{enumerate}

\begin{remark}\label{R:Isom}
A related problem is: {\it For what Banach spaces $X$ does the
existence of isometric embeddings of finite subsets of a locally
finite metric space $A$ into $X$ imply that $A$ admits an
isometric embedding into $X$?} Numerous examples of Banach spaces
not satisfying this condition were discovered in \cite{KL08, OO19,
OO19a, OO19b}. It is easy to show by the standard ultraproduct
techniques  (use \cite[Proposition 2.21]{Ost13} for isometric
embeddings) that a Banach space $X$ satisfies the condition of the
problem if each separable subspace of each ultrapower of $X$
admits an isometric embedding into $X$. However, complete answer
to this problem seems to be unknown.
\end{remark}

It is worthwhile to mention here some works devoted to more
general than locally finite metric spaces with weaker than
bilipschitz embeddings: \cite{Bau12}, \cite{BL15}, \cite{Bau22},
\cite{Net22+}.

There are other problems, the statements of which sound somewhat
similar to finite determination problems, but the techniques used
for working on  them is quite different. One of them is the
following: {\it Let a metric space $(X,d)$ be a union of its
metric subspaces $A$ and $B$, disjoint or not. Assume that $A$ and
$B$ have a certain metric property $\mathcal{P}$. Does this imply
that $X$ also has property $\mathcal{P}$, possibly in some
weakened form?} See the papers \cite{DG07}, \cite{MN13},
\cite{MM16}, and \cite{OR22} for information on this problems and
some related problems.

\subsection{The main result and its proof plan}
\label{ss:plan}

The main step in achieving the stated above goals of the paper is
the following result. Using standard techniques
we derive from Theorem \ref{T:main} several finite determination
results (see Section \ref{S:FD}). We first remind some basic
definitions.

\begin{definition}\label{D:BM&FinRep}
The {\it Banach-Mazur distance} between Banach spaces $F$ and $G$
is defined by
$$d_{\rm BM}(F,G)=\inf\{ \|T\|\|T^{-1}\|:\  T:F\to G \text{ is an
isomorphism}\}.$$ If $F$ and $G$ are not isomorphic, we put
$d_{\rm BM}(F,G)=\infty$.

Let $Y$ and $X$ be two Banach spaces. The space $Y$ is said to be
{\it finitely representable} in $X$ if for any $\alpha>0$ and any
finite-dimensional subspace $F\subset Y$ there exists a
finite-dimensional subspace $G\subset X$ such that $d_{\rm
BM}(F,G)<1+\alpha$.
\end{definition}

\begin{theorem}
\label{T:main} Let $Y$ be an infinite-dimensional Banach space,
finitely representable in a Banach space $X$. Let $\mm$ be a
subset of $Y$ which is a locally finite metric space (with respect
to the distance induced from $Y$). Then, for any $\epsilon > 0$,
$\mm$ admits a bilipschitz embedding into $X$ with distortion at
most $(3+\epsilon)$.
\end{theorem}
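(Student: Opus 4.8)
The plan is to decompose $\mm$ into finite subspaces living at geometric scales, to embed each such piece almost isometrically into $X$ using finite representability of $Y$, and then to assemble these local embeddings into a single bilipschitz embedding by means of the logarithmic spiral gluing of \cite{OO19}. The Ramsey theorem enters exactly as in \cite{BS74}: it supplies, inside a space finitely representable in $X$, a homogeneous ``scaffold'' of mutually isometric finite-dimensional blocks in quantitatively controlled relative position, along which the gluing can be carried out with a good constant.

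First, fix $x_0\in\mm$; by local finiteness we may write $\mm=\bigcup_{k\ge1}\mm_k$ with $\mm_k=\{x\in\mm:\dist(x,x_0)\le r_k\}$ finite, $\mm_k\subseteq\mm_{k+1}$, and $r_k\uparrow\infty$, and we are free to make the ratios $r_{k+1}/r_k$ grow as fast as we wish; this freedom will absorb all multiplicative losses into $\epsilon$. Each $\mm_k$ lies in a finite-dimensional subspace $F_k\subseteq Y$. Since the local embeddings of $\mm_1,\mm_2,\dots$ obtained directly from finite representability bear no relation to one another, we first build the scaffold: applying the Ramsey theorem as in \cite{BS74} to a normalized sequence in $Y$ encoding the isometric types of all the $F_k$ yields a spreading model $E$ of $Y$, which is finitely representable in $Y$ and hence in $X$, and whose canonical basis is spreading with basis constant arbitrarily close to $1$. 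Successive equal-length blocks of this basis give mutually isometric finite-dimensional subspaces $E_1,E_2,\dots\subseteq E$, each large enough to receive a $(1+\alpha)$-copy of the corresponding $F_k$ together with two ``angular'' coordinates, the spreading property giving two-sided control of the norm of any vector supported on two consecutive blocks in terms of the block norms. Transporting finitely many blocks at a time into $X$, and inductively choosing the copy used at scale $k+1$ far enough out that it agrees up to error $\alpha$ with the copy already fixed at scale $k$ (using the homogeneity of $E$), produces the scaffold inside $X$.

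Next comes the gluing. For $x\in\mm$ put $t(x)=\dist(x,x_0)$ and define $\Phi(x)\in X$ by superposing, over the at most two scales $k$ with $r_k$ comparable to $t(x)$, the copies $\iota_k(x)$, weighted by a partition of unity subordinate to the scales and rotated inside the angular coordinates of block $k$ by an angle that increases by a fixed increment from one scale to the next, with the radii and weights chosen so that, as $t$ varies, the image of the basepoint ``at scale $t$'' traces out a logarithmic spiral. Self-similarity of that spiral makes $\Phi$ scale-invariant, so the bilipschitz estimate reduces to two regimes: when $x,y$ lie at comparable scales, $\|\Phi(x)-\Phi(y)\|$ equals $\dist(x,y)$ up to the spiral's stretch/compression factor and the accumulated $(1+\alpha)$-losses; when $x,y$ lie at widely separated scales, both $\dist(x,y)$ and $\|\Phi(x)-\Phi(y)\|$ are within a small factor of $t(x)+t(y)$ (the small-scale part of $\Phi(x)$ being negligible against the large radius), the error going to zero as $r_{k+1}/r_k\to\infty$. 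Optimizing the angular speed of the spiral against the fixed angular increment drives the stretch/compression factor down to $3$, and the leftover $\alpha$'s account for the $+\epsilon$, yielding a $(3+\epsilon)$-bilipschitz embedding $\mm\hookrightarrow X$.

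The main obstacle is the gluing step against the background of the \emph{weakness} of the homogeneity provided by the spreading model: a spreading basis need not be unconditional, so the scale-$k$ pieces cannot be treated as mutually orthogonal directions, and every lower bound for a spiral superposition must be squeezed out of the spreading and basis-constant inequalities alone. Carrying this through while keeping the final distortion at $3+\epsilon$ — rather than the order of $10^3$ lost by the barycentric gluing of \cite{Bau07,Ost12} — and simultaneously verifying that the scale-by-scale copies inside $X$ can be chosen consistently, is where the real work lies; the logarithmic spiral, in place of the barycentric gluing, is precisely what makes the two-sided bounds tight enough for the constant $3$ to emerge.
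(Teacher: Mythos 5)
Your high-level architecture — geometric shells, near-isometric local embeddings from finite representability, logarithmic spiral gluing, a Ramsey/Brunel--Sucheston step to make the pieces cooperate — matches the paper's. But two of the steps you describe are not carried out, and as stated they would not work; one is a genuine misreading of how the Brunel--Sucheston result is used here, and the other is the actual heart of the argument.

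\textbf{The scaffold.} You propose to apply Proposition~1 of \cite{BS74} to ``a normalized sequence in $Y$ encoding the isometric types of all the $F_k$'' to obtain a spreading model $E$ of $Y$ whose basis blocks $E_1,E_2,\dots$ each ``receive a $(1+\alpha)$-copy of the corresponding $F_k$.'' This is not true in general: the spreading model of a single bounded sequence in $Y$ is a fixed space determined by the asymptotics of that sequence; there is no reason for its blocks to contain near-isometric copies of arbitrary finite-dimensional subspaces of $Y$, and in fact for most $Y$ they do not. The paper avoids this entirely. There is no spreading model of $Y$ in the construction. Instead one fixes, directly from finite representability, a sequence of linear near-isometries $\psi_n:Y_n\to X$ (with $Y_n=\lin(\mB_{R_n})$), so each $\psi_n$ already carries near-copies of all the relevant finite pieces. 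The Ramsey step of \cite{BS74} is then applied, for each of the finitely many test directions $u\in U_i$, to the sequence of \emph{vectors} $\{\psi_n(u)\}_n$ in $X$, in order to extract a single subsequence $\{\Psi_n\}$ along which each such $\{\Psi_n(u)\}_n$ is a BS-sequence. The homogeneity being exploited is of the sequence of embeddings, not of $Y$.

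\textbf{Where the $3$ comes from.} You write that ``optimizing the angular speed of the spiral against the fixed angular increment drives the stretch/compression factor down to $3$,'' and you flag as ``the main obstacle'' that the lower bound ``must be squeezed out of the spreading and basis-constant inequalities alone,'' but you do not do the squeezing, and the optimization you describe is not the source of the constant. In the paper the angular speed $\ve$ is sent to $0$; it only contributes an $O(\ve)$ error (Proposition~\ref{P:dT_est}). The constant $3$ comes from an explicit lower bound on the spreading-model norm $L(\acv)$ for $\acv=(\cos\tau,\sin\tau,0,\dots)$: from the two triangle inequalities
\[
c\nm{f_{i,j}}+s\nm{f_{j,k}}+s\nm{f_{i,k}}\ge\nm{(c^2+cs)e_i},\qquad
c\nm{f_{i,j}}+s\nm{f_{j,k}}+c\nm{f_{i,k}}\ge\nm{(s^2+cs)e_k},
\]
with $f_{p,q}=ce_p+se_q$, one gets
\[
L(\acv)\ \ge\ \max\lc{\frac{c(c+s)}{c+2s},\ \frac{s(c+s)}{2c+s}}\ \ge\ \frac{\sqrt2}{3},
\]
while the upper bound $\le\sqrt2(1+\gamma)$ is immediate from Cauchy--Schwarz. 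The ratio of these two, not any tuning of angular speeds, is the $3$. Without this lemma (Proposition~\ref{P:Pab} in the paper), nothing in the spreading-basis structure forces a lower bound of the right order: the angular rotation can kill the norm, and the spreading inequality alone does not prevent it. So the proposal, as written, is missing the single estimate on which the theorem's constant depends, and the mechanism it offers in its place (spreading-model scaffold plus angular-speed optimization) does not reproduce it.
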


First we describe the form in which the desirable embedding will
be sought. After that, we shall provide the details showing that
one can make choices in this construction leading the the desired
estimates.

For a normed vector space $X$ and $r>0$ we denote by $S_r(X)$ the
centered at $0$ sphere of radius $r$ in $X$, and by $B_r(X)$ the
closed ball centered at $0$ of radius $r$. For $0<r< R$ we use the
notation $A_{r,R}(X) = B_R(X)\setminus B_r(X) $ for the
corresponding annulus in $X$. We also use the notation $\mB_r =
B_r(Y)\cap \mm$ and $\mA_{r,R} = A_{r,R}(Y)\cap\mm$.

Since $\mm$ is locally finite, for every  $0<R<\infty$, the
subspace $Y_R$ of $Y$ spanned by $\mB_R$ is finite dimensional.
Also, because  $Y$ is finitely representable in $X$, for every
$\alpha>0$, there exists a linear operator
 $\psi_{R,\alpha}:Y_R \to X$ such that

\begin{equation}\label{E:psi} \nm{y} \le \nm{\psi_{R,\alpha} y}\le (1+\alpha)\nm{y} \mbox{ for all } y\in
 Y_R.\end{equation}

 We construct a desirable bilipschitz embedding $\Psi:\mm \to X$ as a weighted sum of $\lc{\psi_{R,\alpha}}$
 for suitably chosen $R$, $\alpha$ and $\psi_{R,\alpha}$. Note
 that $\psi_{R,\alpha}$ satisfying \eqref{E:psi}  does not have to be uniquely determined by
 $R$ and $\alpha$, and we shall use this freedom.

\section{Proof of Theorem~\ref{T:main}}

\subsection{Choices of necessary parameters and maps}
\label{ss:Cf}

We pick $\gamma,\delta,\ep\in (0,1)$. Next we pick a sequence of
radii, which will be used to choose $\psi_{R,\alpha}$, and a
sequence of weight functions $\tau_i(t)$ and $\mu_i(t)$.
\medskip

For $i\ge 1$, we construct  inductively $r_i$, $R_i$ with
\begin{equation}
\label{E:rR_ratio} \frac{1}{\delta} R_{i-1}< r_i< R_i,
\end{equation}
 and so that there
exists a continuous function $\tau_i:[0,\infty) \to [0,\pi/2]$
which is differentiable everywhere except, possibly, $r_i$ and
$R_i$, and has the properties:
\begin{enumerate}
\item \label{i:t1} $\tau_i (t) = 0$ for all $t\in [0,r_i]$, \item
\label{i:t2}  $\tau_i (t) = \pi/2$ for all $t\in [R_i, \infty)$,
\item \label{i:t3} $0\le \tau_i'(t) \le \ve/t$ for all
$t\in[r_{i},R_{i}]$.
\end{enumerate}
Note that for $r_1$ one can pick any positive value.  Next, $R_1$
has to be chosen so that $\ve \ln \frac{R_1}{r_1} \ge
\frac{\pi}{2}$ in order for a function $\tau_1$ with properties
\ref{i:t1}, \ref{i:t2}, and \ref{i:t3}, to exist. Then, $r_2$ has to
satisfy the leftmost inequality in \eqref{E:rR_ratio}, and again
$R_2$ has to be chosen so that $\ve \ln \frac{R_2}{r_2} \ge
\frac{\pi}{2}$ in order for the function $\tau_2$ with the desired
properties to exist. Inductively, we obtain the infinite sequences
$\lc{r_i}_i$, $\lc{R_i}_i$, and $\lc{\tau_i}_i$.

Using the sequence $\lc{\tau_i}_i$ we  define for all $i\ge 1$,
\[ \mu_i(t)=\cos(\tau_{i}(t))
\mbox{ and } \mu_{i+1}(t)=\sin(\tau_{i}(t)) \mbox{ for }
t\in[r_i,R_i].\]

Defined in such a way weight functions $\{\mu_i\}_{i=1}^\infty$
satisfy the conditions \eqref{i:a}-\eqref{i:e} below (see also
Figure~\ref{F:Cfunct}):

\begin{enumerate}[{\bf (a)}]

\item \label{i:a} $\mu_1$ is supported on $[0,R_1]$,

\item $\mu_i$ $(i\ge 2)$ is supported on $[r_{i-1}, R_i]$,

\item $\mu_1=1$ on $[0,r_1]$ and is decreasing on $[r_1,R_1]$,

\item For $i\ge 2$, the function $\mu_i$ is increasing on
$[r_{i-1},R_{i-1}]$, satisfies $\mu_i=1$ on $[R_{i-1},r_i]$, and
is decreasing on $[r_i,R_i]$,

\item \label{i:e}   For all $i\ge 1$,
  \[ \lp{ \lp{\mu'_i(t)}^2 + \lp{\mu'_{i+1}(t)}^2}^\frac12 \le \ve/t \ \mbox{ for all } \ t\in [r_i,R_i].\]

 \end{enumerate}

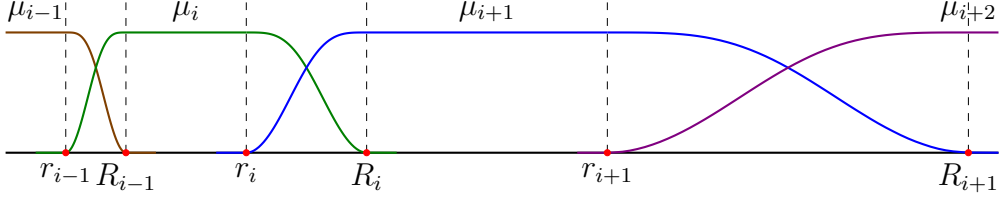
\begin{figure}
\begin{tikzpicture}[scale=0.4]

\node[above] at (1,4) {  $\mu_{i-1}$}; \node[above] at (6,4) {
$\mu_{i}$}; \node[above] at (16,4) {  $\mu_{i+1}$}; \node[above]
at (32,4) {  $\mu_{i+2}$};

\draw[thick] (0,0) -- (33,0);

\draw[very thin, dashed] (2,0)--(2,5);

\draw[very thin, dashed] (4,0)--(4,5);

\draw[very thin, dashed] (8,0)--(8,5);

\draw[very thin, dashed] (12,0)--(12,5);

\draw[very thin, dashed] (20,0)--(20,5);

\draw[very thin, dashed] (32,0)--(32,5);

 \draw[thick, color = orange!50!black, domain=2:4, smooth, variable=\x]
 (0,4) -- (2,4) --plot (\x, {4*cos((3.141/2*1/4*(5-\x)*(-2 + \x)^2) r)}) -- (5,0);

\draw[thick, color = green!50!black, domain=2:4, smooth,
variable=\x]
 (1,0) -- (2,0) --plot (\x, {4*sin((3.141/2*1/4*(5-\x)*(-2 + \x)^2) r)}) -- (8,4);

 \draw[thick, color = green!50!black, domain=8:12, smooth, variable=\x]
 plot (\x, {4*cos((3.141/2*1/32*(\x-14)*(-8 + \x)^2) r)})--(13,0) ;

 \draw[thick, color = blue, domain=8:12, smooth, variable=\x]
 (7,0) -- plot (\x, {4*sin((3.141/2*1/32*(-\x+14)*(-8 + \x)^2) r)})--(20,4) ;

 \draw[thick, color = blue, domain=20:32, smooth, variable=\x]
 plot (\x, {4*cos((3.141/2*1/54*1/16*(38 - \x)*(-20 + \x)^2) r)})--(33,0) ;

 \draw[thick, color = violet, domain=20:32, smooth, variable=\x]
 (19,0) -- plot (\x, {4*sin((3.141/2*1/54*1/16*(38 - \x)*(-20 + \x)^2) r)})--(33,4) ;

\filldraw[red] (2,0) circle (.1); \node[below] at (2,0) {
$r_{i-1}$};

\filldraw[red] (4,0) circle (.1); \node[below] at (4,0) {
$R_{i-1}$};

\filldraw[red] (8,0) circle (.1); \node[below] at (8,0) {
$r_{i}$};

\filldraw[red] (12,0) circle (.1); \node[below] at (12,0) {
$R_{i}$};

\filldraw[red] (20,0) circle (.1); \node[below] at (20,0) {
$r_{i+1}$};

\filldraw[red] (32,0) circle (.1); \node[below] at (32,0) {
$R_{i+1}$};

   \end{tikzpicture}
 \caption{Graph of coefficient functions $\lc{\mu_i}_i$ }
\label{F:Cfunct}
\end{figure}

\begin{remark} On the interval $[r_i,R_i]$, the curve
$ t\to \lp{t\cos(\tau_{i}(t)), t\sin(\tau_{i}(t))} $ resembles a
piece of a logarithmic spiral in the plane. The idea for using
weight functions with these properties originated in
\cite{OO19}. It takes advantage of the fact that  by controlling
the speed (via the parameter $\ve$) one can control the distortion
of the logarithmic spiral gluing.
\end{remark}

For each sequence $0<r_1<R_1<r_2 <R_2< \dots$ and  each locally finite
metric space $\mm$ in $Y$ we define the sequence
$\lc{Y_n}_{n=1}^\infty$ of nested ($Y_n\subset Y_{n+1}$ for all
$n\ge 1$) finite-dimensional subspaces of $Y$ as linear spans,
\[ Y_n = \lin \lp{\mB_{R_n}}.\]
Since $Y$ is finitely representable in $X$, for every
 sequence $\lc{\ve_n}_n$ of positive numbers,  there exist linear
embeddings $\psi_n :Y_n \to X$ such that for any $n\ge
1$ and any $y \in Y_n$ it holds that
\begin{equation}
\label{E:aiso}  \nm{y} \le \nm{\psi_n(y)} \le (1+\ve_n) \nm{y}.
\end{equation}
We may assume that for the chosen $\gamma >0$, 
the condition
$\prod_{n=1}^\infty (1+\ve_n) \le 1+\gamma$ holds.

We shall seek the desired embedding  $\Psi:\mm \to X$ in the form
\begin{equation}
\label{E:DefPsi}
\Psi(x)=\sum_{i=1}^\infty \mu_i(\|x\|)
\Psi_{i}(x),
\end{equation}
where $\lc{\Psi_i}_i$ is a subsequence of $\lc{\psi_{n}}_n$.

To show that a suitable selection of a subsequence $\lc{\Psi_i}_i$ is possible
we estimate the Lipschitz behavior of the function $\Psi$.

For each $i \ge 1$,
let $\mm_i=\lc{ x \in \mm \ :   \  R_{i-1} < \nm{x} \le r_{i+1}} \cup \lc{0}$ (with the convention that $R_0=0$).


Note that when restricted to the set $\mm_i$, the defining series for $\Psi$ reduces to
\begin{equation}
\label{E:PsiMi}
\Psi(x) = \mu_i\lp{\nm{x}} \Psi_i(x) + \mu_{i+1}\lp{\nm{x}} \Psi_{i+1}(x) .
\end{equation}
To simplify notation, let us denote:
\begin{itemize}
\item $Y_{r_{i+1}}=\lin \lp{\mB_ {r_{i+1}}}$ by $V$,
\item $\tau_{i}$ by $\tau$,
\item $\mu_i\lp{\nm{x}} = \cos\lp{\tau_{i}\lp{\nm{x}}}$ by $c(x) =\cos\lp{\tau\lp{\nm{x}}}$,
\item $\mu_{i+1}\lp{\nm{x}} = \sin\lp{\tau_{i}\lp{\nm{x}}}$ by $s(x)=\sin\lp{\tau\lp{\nm{x}}}$,
\item $\Psi_{i}$ and  $\Psi_{i+1}$ by $E$ and $F$,  respectively.
\end{itemize}
With this notation, we will obtain  $\left. \Psi\right|_{\mm_i}= \left. T\right|_{\mm_i}$ where
$T:V\to X$ is a map which we are now going to define.


For any angle $\theta$ define a linear operator $G_\theta$ mapping $V$ into $X$ by
\[ G_\theta x= \cos(\theta) Ex+\sin(\theta)Fx. \]
We also consider the function $g:[0,\pi]\times S_1(V) \to [0,\infty)$ given by
\[ g(\theta, u) = \nm{G_\theta u} =\nm{\cos(\theta) Eu+\sin(\theta)Fu}.\]

\begin{remark}
\label{R:beta_est}
It can be easily seen from the triangle and the Cauchy-Schwarz inequalities that
\[ g(\theta, u) \le
\av{\cos(\theta)} \nm{Eu}+\av{\sin(\theta)}\nm{Fu}
\le \lp{\nm{E u}^2 +\nm{Fu}^2}^\frac{1}{2},\]
for any angle $0 \le \theta \le \pi$ and any unit vector $u $ in $V$.
Since $1\le \nm{E u}, \nm{F u} \le 1+\gamma$, we obtain
\[g(\theta, u)\le \sqrt{2}(1+\gamma).\]
\end{remark}

\medskip
%

\noindent
By construction, $\tau:[0,\infty) \to [0,\pi/2]$ has the properties
\begin{enumerate}[(i)]
\item $\tau (t) = 0$ for all $t\in [0,r_i]$,
\item $\tau (t) = \pi/2$ for all $t\in [R_i, \infty)$,
\item \label{i:Dtau}  $\tau$ is differentiable  for all $r_i<t <R_i$  and $0\le \tau'(t) \le \ve/t$.
\end{enumerate}
Such a function $\tau$ induces a map  $T: V\to X$ via the formula
\begin{equation}
\label{E:Tdef}
 T(x) = G_{\tau(\nm{x})} x = \cos\lp{\tau(\nm{x})}Ex +\sin\lp{\tau(\nm{x})}Fx.
 \end{equation}
Note that if $E(V)\cap F(V)=\lc{0}$, then $G_\theta$ is an injective linear embedding of $V$ into $X$ for any $\theta$.
In this case $T$ will be a bilipschitz metric embedding of $V$ in $X$, but with possibly very large distortion.
However if $E(V)\cap F(V)\neq\lc{0}$ it is possible that for some angles $\theta$ the
kernel of $G_\theta$ has positive dimension.
In such a case $T$ doesn't even have an inverse defined on $T(V)$.
For example if it happens that $F=-E$, an entire sphere (of radius $t$ with $\tau(t)=\pi/4$) in $V$
is collapsed to zero by $T$.
Nevertheless, the estimates for $\nm{T(x)-T(y)}$
 which we present in the following proposition hold for any $x, y$ in $V$ and will be used to get
 the necessary bilipschitz estimates when $x, y$ are in $\mm_i$ for a suitable subsequence $\lc{\Psi_i}_i$.

\begin{proposition}
\label{P:dT_est}
For any nonzero vector  $x$ in $V$,
\begin{equation}
\label{E:DTray}
\frac{\nm{T (x)}}{\nm{x}} = g\lp{\sigma, u}
\ \mbox{ where } \sigma=\tau(\nm{x})    \mbox{ and  }  u= \frac{x}{\nm{x}}.
\end{equation}
For any $x, y\in V$, $x\neq y$, $\nm{x}\ge \nm{y}>0$, the inequality
\begin{equation}
\label{E:DTest}
g(\sigma,w) - \ve  g(\theta,v)  \le \frac{  \nm{T(x) - T(y)}}{\nm{x-y}} \le g(\sigma,w) +\ve  g(\theta,v)
\end{equation}
holds,
where   $\sigma=\tau(\nm{x})$, $w =  \frac{ x -y}{\nm{x-y}}$,  $\theta=\frac{\pi}{2}+ \frac{\tau(\nm{x}) +\tau(\nm{y})}{2}$,
 and $v=  \frac{ y}{\nm{y}}$.
\end{proposition}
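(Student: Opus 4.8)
Here is the plan I would follow. The first identity \eqref{E:DTray} is immediate from the linearity of $G_\sigma$: writing $x=\nm{x}u$ with $u=x/\nm{x}$ gives $T(x)=G_{\tau(\nm{x})}x=\nm{x}\,G_\sigma u$, so $\nm{T(x)}/\nm{x}=\nm{G_\sigma u}=g(\sigma,u)$. The content is in \eqref{E:DTest}, and the strategy is to isolate a ``main term'' $G_\sigma(x-y)$ and a ``correction term'' coming from the variation of the angle, then estimate the correction.

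For \eqref{E:DTest}, set $\sigma=\tau(\nm{x})$ and $\rho=\tau(\nm{y})$, and write
\[
T(x)-T(y)=G_\sigma x-G_\rho y=G_\sigma(x-y)+(G_\sigma-G_\rho)y .
\]
The key algebraic step is to bring the correction term into the same form as the maps $G_\theta$. Using the sum-to-product identities $\cos\sigma-\cos\rho=-2\sin\frac{\sigma+\rho}{2}\sin\frac{\sigma-\rho}{2}$ and $\sin\sigma-\sin\rho=2\cos\frac{\sigma+\rho}{2}\sin\frac{\sigma-\rho}{2}$, one gets
\[
(G_\sigma-G_\rho)y=2\sin\tfrac{\sigma-\rho}{2}\Bigl(-\sin\tfrac{\sigma+\rho}{2}\,Ey+\cos\tfrac{\sigma+\rho}{2}\,Fy\Bigr)=2\sin\tfrac{\sigma-\rho}{2}\;G_\theta y,
\]
since $-\sin\phi\,Ey+\cos\phi\,Fy=\cos(\tfrac\pi2+\phi)Ey+\sin(\tfrac\pi2+\phi)Fy$ with $\phi=\frac{\sigma+\rho}{2}$; this is precisely why $\theta=\frac{\pi}{2}+\frac{\tau(\nm{x})+\tau(\nm{y})}{2}$ appears in the statement. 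Note $\theta\in[\tfrac\pi2,\pi]$, so $g(\theta,\cdot)$ is defined. Applying \eqref{E:DTray}-type identities, $\nm{G_\sigma(x-y)}=\nm{x-y}\,g(\sigma,w)$ with $w=(x-y)/\nm{x-y}$, and $\nm{(G_\sigma-G_\rho)y}=2\bigl|\sin\frac{\sigma-\rho}{2}\bigr|\,\nm{y}\,g(\theta,v)$ with $v=y/\nm{y}$.

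It then remains to show $2\bigl|\sin\frac{\sigma-\rho}{2}\bigr|\,\nm{y}\le\ve\,\nm{x-y}$, after which the triangle inequality $\bigl|\,\nm{G_\sigma(x-y)}-\nm{(G_\sigma-G_\rho)y}\,\bigr|\le\nm{T(x)-T(y)}\le\nm{G_\sigma(x-y)}+\nm{(G_\sigma-G_\rho)y}$ and division by $\nm{x-y}$ yield \eqref{E:DTest}. Since $\nm{x}\ge\nm{y}$ and $\tau$ is non-decreasing, $0\le\sigma-\rho$; because $\tau$ is continuous, differentiable off finitely many points, with $0\le\tau'(t)\le\ve/t$, it is absolutely continuous and $\sigma-\rho=\int_{\nm{y}}^{\nm{x}}\tau'(t)\,dt\le\ve\ln\frac{\nm{x}}{\nm{y}}$. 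Then $0\le 2\sin\frac{\sigma-\rho}{2}\le\sigma-\rho\le\ve\ln\bigl(1+\frac{\nm{x}-\nm{y}}{\nm{y}}\bigr)\le\ve\,\frac{\nm{x}-\nm{y}}{\nm{y}}\le\ve\,\frac{\nm{x-y}}{\nm{y}}$, using $\sin t\le t$ for $t\ge0$, $\ln(1+a)\le a$, and the reverse triangle inequality. Multiplying by $\nm{y}$ gives the desired bound. The only real care-points — not genuine obstacles — are justifying the integral representation of $\tau(\nm x)-\tau(\nm y)$ despite non-differentiability at the breakpoints $r_i,R_i$, and checking the ranges so that $g(\theta,\cdot)$ is legitimately evaluated; the trig identity producing the clean $G_\theta y$ is the one substantive idea and is exactly the ``logarithmic spiral'' structure being exploited.
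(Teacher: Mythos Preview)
Your argument is correct and follows essentially the same route as the paper: the same decomposition $T(x)-T(y)=G_\sigma(x-y)+(G_\sigma-G_\rho)y$, the same sum-to-product identity yielding $2\sin\frac{\sigma-\rho}{2}\,G_\theta y$, and the same final triangle-inequality estimate. The only cosmetic difference is that the paper bounds $\tau(\nm{x})-\tau(\nm{y})$ via the Mean Value Theorem while you integrate $\tau'$ and use $\ln(1+a)\le a$; your version has the mild advantage of handling the possible non-differentiability of $\tau$ at the breakpoints $r_i,R_i$ without further comment.
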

\proof
The equality \eqref{E:DTray} is obvious.
To prove \eqref{E:DTest} we write
\begin{equation}
\label{E:DT}
\begin{aligned}
T(x) - T(y) = &  c(x) \lp{E x-Ey} + s(x) \lp{ Fx-Fy} \\
   & + (c(x)-c(y))Ey + (s(x)-s(y))Fy.
   \end{aligned}
\end{equation}
The first two terms give
\[ c(x) \lp{E x-Ey} + s(x) \lp{ Fx-Fy} = \lp{c(x) E+s(x)F}(x-y)= G_\sigma (x-y),\]
where $\sigma=\tau(\nm{x})$.

To estimate the sum of the last two terms in \eqref{E:DT},  note that
\[ \begin{aligned}
c(x)-c(y)  & = \cos(\tau(\nm{x}))-\cos(\tau(\nm{y})) \\
    & =- 2\sin\lp{ \frac{\tau(\nm{x}) -\tau(\nm{y})}{2}} \sin\lp{ \frac{\tau(\nm{x}) +\tau(\nm{y})}{2}},
    \end{aligned}\]
and
\[ \begin{aligned}
s(x)-s(y) & = \sin(\tau(\nm{x}))-\sin(\tau(\nm{y})) \\
   &= 2\sin\lp{ \frac{\tau(\nm{x}) -\tau(\nm{y})}{2}} \cos\lp{ \frac{\tau(\nm{x}) +\tau(\nm{y})}{2}}.
   \end{aligned}\]
 Denote $\theta=  \frac{\pi}{2}+\frac{\tau(\nm{x}) +\tau(\nm{y})}{2}$, so that
\[ \begin{aligned}
 (c(x)-c(y))Ey + & (s(x)-s(y))Fy  \\
  & =  2\sin\lp{ \frac{\tau(\nm{x})  - \tau(\nm{y})}{2}} \lp{ \cos (\theta)Ey+\sin(\theta)Fy} \\
&  = 2\sin\lp{ \frac{\tau(\nm{x}) -\tau(\nm{y})}{2}} G_\theta y.  \\
 \end{aligned}\]

Replacing back in \eqref{E:DT} we have
\begin{equation}
\label{E:DT2}
T(x) - T(y) = G_\sigma (x-y) +  2\sin\lp{ \frac{\tau(\nm{x}) -\tau(\nm{y})}{2}} G_\theta y.
\end{equation}

Since we assume that $\nm{y}\le \nm{x}$, $\tau$ is non-decreasing, and   $\tau(\nm{x})  \le \frac{\pi}{2}$,
by the  Mean Value Theorem we obtain
\[ 0 \le 2\sin\lp{ \frac{\tau(\nm{x}) -\tau(\nm{y})}{2}} \le   \tau(\nm{x}) -\tau(\nm{y}) =\tau'(t) \lp{ \nm{x}-\nm{y}},\]
for some $\nm{y}\le t\le \nm{x}$.
Using the upper bound on the derivative of $\tau$ in item~(\ref{i:Dtau}) above, we get
\[ 0 \le 2\sin\lp{ \frac{\tau(\nm{x}) -\tau(\nm{y})}{2}} \le \frac{\ve}{t} \lp{ \nm{x}-\nm{y}} \le \frac{\ve}{\nm{y}} \nm{x-y}. \]
Applying this estimate to the second term in the right hand side of \eqref{E:DT2} we obtain
\[ \begin{aligned}
\nm{ G_\sigma (x-y)} -\frac{\ve}{\nm{y}}   \nm{x-y}  \nm{ G_\theta y}  & \le \nm{T(x) - T(y)} \\
     & \le \nm{ G_\sigma (x-y)} +  \frac{\ve}{\nm{y}} \nm{ x-y}  \nm{ G_\theta y}.
\end{aligned} \]
Denote $w =  \frac{ x -y}{\nm{x-y}}$, $v=  \frac{ y}{\nm{y}}$ and thus
\[ \nm{ G_\sigma (x-y)} = g(\sigma,w)  \nm{x-y} \ \mbox{ and } \
  \nm{ G_\theta y} =  g(\theta,v)  \nm{y}.\]
We therefore obtain
\[ \lp{g(\sigma,w)  - \ve g(\theta,v) }\nm{x-y} \le \nm{T(x) - T(y)}\le \lp{g(\sigma,w)  +\ve  g(\theta,v)}\nm{x-y}. \]
\endproof
For this estimate to be useful $g(\sigma, w)$ needs to be nontrivially large so that the left hand side in \eqref{E:DTest}
is far from zero.
To make it nontrivially large we use the fact that we can select the subsequence $\lc{\Psi_i}_i$ arbitrarily.
The selection process is based on the Ramsey-type result of \cite[Proposition 1]{BS74}.
This is what we are going to describe next.


\subsubsection{Brunel-Sucheston sequences}
Let $c_{00}$ denote the vector space of sequences with a finite number of nonzero terms.
 \begin{proposition}[Proposition~1 in \cite{BS74}]
\label{P:P1BS}
 Each bounded sequence $\lc{x_n}_n$ in a Banach space $X$ contains a subsequence $\lc{e_n}_n$ with the following property:
 For each $\acv =(a_1,a_2, \dots)\in c_{00}$ there exists a number $L(\acv)$ such that for every $\ve >0$,
there exists a positive integer $\nu$ with
\[ \av{ \nm{ \sum_i a_i e_{n_i}} -L(\acv)}\le \ve \]
for all integers $n_i$ with $\nu \le n_1<n_2< \cdots$.
\end{proposition}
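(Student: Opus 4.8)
The plan is to recover this as the classical spreading-model construction: the infinite Ramsey theorem applied coordinate-wise, followed by a diagonalization over a countable dense family of coefficient vectors, and finally a continuity argument to pass from the dense family to all of $c_{00}$. Fix once and for all a bound $M$ with $\nm{x_n}\le M$ for all $n$.

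\emph{Step 1 (Ramsey stabilization for one coefficient vector).} Fix an integer $k\ge 1$, a vector of \emph{rational} coefficients $(a_1,\dots,a_k)$, and a tolerance $\eta>0$. Color each $k$-element subset $\{n_1<\dots<n_k\}$ of $\N$ by the integer $\lfloor \nm{\sum_{j=1}^k a_j x_{n_j}}/\eta\rfloor$. Since $\nm{\sum_j a_j x_{n_j}}\le M\sum_j\av{a_j}$, only finitely many colors occur, so the infinite Ramsey theorem yields an infinite $N\subseteq\N$ all of whose $k$-subsets have the same color; hence $\nm{\sum_j a_j x_{n_j}}$ oscillates by at most $\eta$ as $\{n_1<\dots<n_k\}$ ranges over the $k$-subsets of $N$.

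\emph{Step 2 (diagonalization).} Enumerate a countable set $\mathcal{D}\subset c_{00}$ that is $\ell_1$-dense in $c_{00}$ (say, all finitely supported rational sequences) and a sequence $\eta_m\downarrow 0$ into a single list of tasks $(\acv^{(1)},\eta_1),(\acv^{(2)},\eta_2),\dots$, each $\acv^{(m)}$ appearing infinitely often. Starting from $N_0=\N$, apply Step 1 repeatedly inside the previously produced infinite set to get a decreasing chain $N_0\supseteq N_1\supseteq N_2\supseteq\cdots$ of infinite sets such that on $N_m$ the quantity $\nm{\sum_i \acv^{(m)}_i x_{n_i}}$ oscillates by at most $\eta_m$ over all increasing tuples drawn from $N_m$. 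Let $\{e_n\}_n$ be the subsequence of $\{x_n\}_n$ indexed by a diagonal set $N=\{p_1<p_2<\cdots\}$ with $p_m\in N_m$. For each $\acv\in\mathcal{D}$ the tail of $N$ eventually lies inside every $N_m$ with $\acv^{(m)}=\acv$, so $\nm{\sum_i a_i e_{n_i}}$ is a Cauchy family that converges, as $\nu\to\infty$ uniformly over $\nu\le n_1<n_2<\cdots$, to a limit $L(\acv)$; this is precisely the asserted property for $\acv\in\mathcal{D}$.

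\emph{Step 3 (passing to all of $c_{00}$).} For any fixed indices $n_1<n_2<\cdots$ and any $\acv,\mathcal b\in c_{00}$, the reverse triangle inequality gives $\big|\,\nm{\sum_i a_i e_{n_i}}-\nm{\sum_i b_i e_{n_i}}\,\big|\le M\sum_i\av{a_i-b_i}$, a bound independent of the indices. Hence $L$ (so far defined on $\mathcal D$) is $M$-Lipschitz for the $\ell_1$-norm there, extends uniquely to an $M$-Lipschitz function on $c_{00}$, and a three-$\ve$ argument — approximate $\acv$ by $\mathcal b\in\mathcal D$ within $\ve/(3M)$, invoke Step 2 for $\mathcal b$, and use the uniform Lipschitz bound — shows the same $e_n$ witness the property for every $\acv\in c_{00}$.

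The only real subtlety, and the step to execute carefully, is the bookkeeping in Step 2: one must verify that a single subsequence $\{e_n\}$ simultaneously handles all tasks $(\acv^{(m)},\eta_m)$. This works because the property ``a tail of $N$ lies in $N_m$'' is inherited by any further tail, and the Ramsey stabilization at stage $m$ is performed \emph{inside} the infinite set produced at stage $m-1$, so later stages never undo earlier ones. No property of $X$ beyond being a Banach space is used — only that the norm is continuous and $\reo$ is complete.
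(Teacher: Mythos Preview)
Your argument is correct and is precisely the classical Brunel--Sucheston construction: Ramsey-stabilize the norm for one coefficient vector at a time, diagonalize over a countable $\ell_1$-dense family, and extend by the uniform $M$-Lipschitz continuity of $\acv\mapsto\nm{\sum_i a_i e_{n_i}}$. The bookkeeping you flag in Step~2 is handled correctly by the nesting $N_0\supseteq N_1\supseteq\cdots$ together with the requirement that each $\acv\in\mathcal{D}$ recur infinitely often in the task list.

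Note, however, that the paper does not itself prove this proposition: it is quoted verbatim as Proposition~1 of \cite{BS74} and used as a black box. So there is no ``paper's own proof'' to compare against; what you have written is essentially the original Brunel--Sucheston argument (and the standard textbook proof). Nothing is missing, and nothing different is happening.
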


\begin{definition}
\label{D:BSs}
We say that a sequence $\lc{e_n}_n$ in a Banach space $X$ is a {\it BS-sequence} (Brunel-Sucheston) if
for any $\acv=(a_1,a_2, \dots)\in c_{00}$ there exists a number $L(\acv)$ such that for any $\ve >0$,
there exists a positive integer $\nu$ with
\[ \av{ \nm{ \sum_i a_i e_{n_i}} -L(\acv)}\le \ve \]
for all integers $n_i$ with $\nu \le n_1<n_2< \cdots$.
\end{definition}
Therefore, Proposition~\ref{P:P1BS} states that  each bounded sequence in a Banach space
contains a subsequence which is a  BS-sequence.
It is clear that any subsequence of a BS-sequence is  a BS-sequence.

\subsubsection{Estimating $L(\acv)$ from below}
\begin{proposition}
\label{P:Pab}
 Assume $\lc{e_n}_n$ is a BS-sequence in $X$ with $\nm{e_n}\to 1$.
Let $\acv=(c, s, 0,0, \dots)\in c_{00}$ with $c, s \ge 0$, not both zero.
Then
\[L(\acv) \ge \max \lc{\frac{c(c+s)}{c+2s}, \frac{s(c+s)}{2c+s}}.\]
 \end{proposition}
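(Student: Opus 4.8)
The key idea is to introduce a single scalar that measures how ``$\ell_1$-like'' the BS-sequence is, and to extract from it three lower bounds for $L(\acv)$ whose upper envelope is \emph{exactly} the claimed quantity, no matter the actual value of that scalar. Concretely, set $\phi(m):=L\big((\underbrace{1,\dots,1}_{m},0,0,\dots)\big)$. Iterated triangle inequality for $L$ (splitting the pattern $(1^{m+m'})$ as $(1^m,0^{m'})+(0^m,1^{m'})$ and using that $L$ ignores leading/trailing zeros) shows $\phi$ is subadditive, so $\kappa:=\lim_{m\to\infty}\phi(m)/m=\inf_m\phi(m)/m$ exists; since $\|e_n\|\to1$ we also have $\phi(m)\le m$, hence $\kappa\in[0,1]$. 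Throughout we use only: (i) $\|e_n\|\to1$; (ii) for $n_1<\dots<n_k$ all large, $\big\|\sum a_ie_{n_i}\big\|$ is within $\ve$ of $L(a_1,\dots,a_k)$, uniformly; (iii) the ordinary triangle and reverse triangle inequalities in $X$.

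\textbf{Step 1 (the $\ell_1$-type bound $L(c,s)\ge\kappa(c+s)$).} Fix $M$, pick $2M$ indices $n_1<\dots<n_{2M}$ sufficiently far out, and put $\sigma=e_{n_1}+\dots+e_{n_M}$, $\sigma'=e_{n_{M+1}}+\dots+e_{n_{2M}}$. Writing $c\sigma+s\sigma'=\sum_{i=1}^M\big(c\,e_{n_i}+s\,e_{n_{M+i}}\big)$ and applying the triangle inequality gives $\|c\sigma+s\sigma'\|\le M\big(L(c,s)+\ve\big)$, since $n_i<n_{M+i}$ are both large. For the reverse estimate assume $c\ge s$ and use the identity $c\sigma+s\sigma'=c(\sigma+\sigma')-(c-s)\sigma'$: then $\|c\sigma+s\sigma'\|\ge c\|\sigma+\sigma'\|-(c-s)\|\sigma'\|\ge c\big(\phi(2M)-\ve\big)-(c-s)\big(\phi(M)+\ve\big)$, because $\sigma+\sigma'$ and $\sigma'$ realize the patterns $(1^{2M})$ and $(1^M)$. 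Combining, letting $\ve\to0$ and then $M\to\infty$ (using $\phi(2M)/M\to2\kappa$, $\phi(M)/M\to\kappa$) yields $L(c,s)\ge 2c\kappa-(c-s)\kappa=(c+s)\kappa$; the case $c\le s$ is symmetric, via $c\sigma+s\sigma'=s(\sigma+\sigma')-(s-c)\sigma$. The whole point of this algebraic choice is that it forces the $\phi(2M)$-term to carry coefficient $c$ (resp.\ $s$) so that the limit collapses to the clean value $(c+s)\kappa$ rather than something weaker such as $(c-3s)\kappa$, which is what a naive decomposition produces.

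\textbf{Step 2 (the two ``piling'' bounds $L(c,s)\ge c-\kappa s$ and $L(c,s)\ge s-\kappa c$).} For the first, take large $n_1<\dots<n_N$ and observe
$(N-1)c\,e_{n_1}+s\sum_{j=2}^N e_{n_j}=\sum_{j=2}^N\big(c\,e_{n_1}+s\,e_{n_j}\big)$,
so this vector has norm $\le(N-1)\big(L(c,s)+\ve\big)$; on the other hand, by the reverse triangle inequality its norm is $\ge(N-1)c\|e_{n_1}\|-s\big\|\sum_{j=2}^N e_{n_j}\big\|\ge(N-1)c(1-\ve)-s\big(\phi(N-1)+\ve\big)$. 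Dividing by $N-1$, letting $\ve\to0$ and $N\to\infty$ gives $L(c,s)\ge c-\kappa s$. The bound $L(c,s)\ge s-\kappa c$ is obtained identically by piling the weight on the \emph{last} index instead, i.e.\ by considering $c\sum_{i=1}^{N-1}e_{n_i}+(N-1)s\,e_{n_N}=\sum_{i=1}^{N-1}\big(c\,e_{n_i}+s\,e_{n_N}\big)$.

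\textbf{Step 3 (optimization).} For any $\kappa\ge0$ the function $\kappa\mapsto\max\{\kappa(c+s),\,c-\kappa s\}$ is a maximum of an increasing and a decreasing function, hence is minimized where the two agree, namely at $\kappa=\tfrac{c}{c+2s}$, with value $\tfrac{c(c+s)}{c+2s}$; therefore $L(c,s)\ge\max\{\kappa(c+s),c-\kappa s\}\ge\tfrac{c(c+s)}{c+2s}$ regardless of $\kappa$. Symmetrically $L(c,s)\ge\max\{\kappa(c+s),s-\kappa c\}\ge\tfrac{s(c+s)}{2c+s}$, and combining the two gives the asserted inequality. (The degenerate cases $c=0$ or $s=0$ are already covered: e.g.\ Step 2 with $s=0$ gives $L(c,0)\ge c$, which matches $\max\{\tfrac{c(c+s)}{c+2s},\tfrac{s(c+s)}{2c+s}\}=c$.)

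\textbf{Expected main obstacle.} The delicate point is the lower estimate in Step 1: one must hit the decomposition $c(\sigma+\sigma')-(c-s)\sigma'$ so that, after dividing by $M$ and sending $M\to\infty$, the contributions $c\cdot2\kappa$ and $-(c-s)\kappa$ combine to precisely $(c+s)\kappa$ — a weaker pairing of $\sigma,\sigma'$ gives a strictly worse constant and the final envelope no longer matches the stated bound. A secondary, purely bookkeeping, point is that all the finitely many index thresholds coming from property (ii) (for the patterns $(c,s)$, $(1^M)$, $(1^{2M})$, etc.) must be met simultaneously, which is harmless since $M$, $N$ are fixed before the indices are chosen.
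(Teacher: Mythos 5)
Your proof is correct, but it follows a genuinely different route from the paper's. The paper gives a single, very short argument using only triples: setting $f_{i,j}=c\,e_i+s\,e_j$, $f_{j,k}=c\,e_j+s\,e_k$, $f_{i,k}=c\,e_i+s\,e_k$ and observing that $c\,f_{i,j}-s\,f_{j,k}+s\,f_{i,k}=(c^2+cs)\,e_i$, the triangle inequality immediately yields $(c+2s)\,L(\acv)\ge c(c+s)$ after letting the indices tend to infinity, and the symmetric combination $-c\,f_{i,j}+s\,f_{j,k}+c\,f_{i,k}=(s^2+cs)\,e_k$ gives the second bound. Your argument instead introduces the auxiliary constant $\kappa=\lim_m L(1^m)/m$ (existence via Fekete subadditivity, $\kappa\in[0,1]$ from $\nm{e_n}\to 1$), derives the three $\kappa$-dependent bounds $L\ge\kappa(c+s)$, $L\ge c-\kappa s$, $L\ge s-\kappa c$, and then optimizes the envelope over $\kappa$, recovering exactly the stated constants. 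Both are sound. The paper's proof is shorter, more self-contained, and needs no asymptotics beyond passing to $L$; your proof is longer but makes the shape of the bound conceptually transparent (it is the worst-case envelope over all possible values of the $\ell_1$-type constant of the spreading model) and likely scales more easily to longer coefficient vectors. One small point worth making explicit in your Step~3 is that the bounds $c-\kappa s$ and $s-\kappa c$ may be negative; this is harmless, since e.g.\ $c-\kappa s<0$ forces $\kappa>c/s\ge c/(c+2s)$, so the $\kappa(c+s)$ bound alone already exceeds $\frac{c(c+s)}{c+2s}$ in that regime.
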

 \proof
 We consider triples of ordered indices $\nu \le i<j<k$ and denote $f_{i,j} = \lp{c e_i+s e_j}$ and similarly for $f_{j,k}= \lp{c e_j+s e_k}$ and $f_{i,k}= \lp{c e_i+s e_k}$.
 As $\nu$ tends to infinity, all three  norms,
 $\nm{f_{i,j}}$, $\nm{f_{j,k}}$, and $\nm{f_{i,k}}$ tend to $L(\acv)$.
Note that
 \[ c \nm{f_{i,j}} +s \nm{f_{j,k}}+s \nm{f_{i,k}} \ge
 \nm{ c f_{i,j} - s f_{j,k}+s f_{i,k}} = \nm{ (c^2+cs)e_i}.
 \]
 Sending $\nu$ to infinity, we obtain
 \[ (c+2s) L(\acv) \ge c(c+s), \mbox{ i.e. } L(\acv) \ge \frac{c(c+s)}{c+2s}.\]
Similarly, note that
  \[ c \nm{f_{i,j}} +s \nm{f_{j,k}}+c \nm{f_{i,k}} \ge
 \nm{ -c f_{i,j} +s f_{j,k}+c f_{i,k}} = \nm{ (s^2+cs)e_k}.
 \]
  In the limit as $\nu$ tends to infinity, we obtain
 \[ (2c+s) L(\acv) \ge s(c+s), \mbox{ i.e. } L(\acv) \ge \frac{s(c+s)}{2c+s}.\]
  \endproof

  \begin{remark}
 \label{R:La_est}
 Applying Proposition~\ref{P:Pab} to $\acv=(\cos\tau, \sin\tau, 0,0, \dots)$ with $\tau \in [0,\pi/2]$,  we get
\[ L(\acv) \ge  \max \lc{\frac{\cos\tau(\cos\tau+\sin\tau)}{\cos\tau+2\sin\tau}, \frac{\sin\tau(\cos\tau+\sin\tau)}{2\cos\tau+\sin\tau}}
 \ge \frac{\sqrt{2}}{3}.\]
 The rightmost inequality follows from
 \[\max \lc{\frac{\cos\tau(\cos\tau+\sin\tau)}{\cos\tau+2\sin\tau}, \frac{\sin\tau(\cos\tau+\sin\tau)}{2\cos\tau+\sin\tau}}
=\frac{\cos\tau+\sin\tau}{\min \lc{1+2\tan \tau,1+2\cot \tau}},  \]
which equals $p(\tau) = \cos\tau\frac{1+\tan\tau}{1+2\tan \tau}$   if $0\le \tau \le \pi/4$, and
$q(\tau) = \sin\tau\frac{1+\cot\tau}{1+2\cot \tau}$   if $\pi/4 \le \tau \le \pi/2$.
It is easy to see that $p$ is decreasing on the interval $[0,\pi/4]$ with minimum value $p(\pi/4) =  \frac{\sqrt{2}}{3}$.
Similarly, $q$ is increasing on the interval $[\pi/4,\pi/2]$ with minimum value $q(\pi/4) =  \frac{\sqrt{2}}{3}$.
 \end{remark}


\subsubsection{Subsequence selection}

For each $i \ge 1$, recall  the notation
$$\mm_i=\lc{ x \in \mm \ :   \  R_{i-1} < \nm{x} \le r_{i+1}} \cup \lc{0},$$
 with the convention that $R_0=0$.
Consider the set $U_i$ of all unit vectors
\[ U_i=\lc{ u = \frac{x-y}{\nm{x-y}}  : x, y\in \mm_i,  r_i < \nm{x},  \nm{x} \ge \nm{y}, x\neq y , \nm{y} \le R_i }.\]
Since $\mm$ is locally finite,  each  of the sets $\mm_i$, and therefore $U_i$, is finite.
Since a vector $u$ in $U_i$ can be the direction of multiple differences
$(x_j-y_j)$ of points in $\mm_i$ with  $\nm{x_j} \ge \nm{y_j}$, we introduce a set $\mt_i(u)$  (see below)
in which we record
all numbers
$\tau_i(\nm{x_j})$ (see items  \ref{i:t1}, \ref{i:t2}, \ref{i:t3} in Subsection~\ref{ss:Cf} for the definition of $\tau_i$).
So, for each $u \in U_i$ we define the (finite) set $\mt_i (u)$ consisting of all  $\tau_i\lp{\nm{x}} $ for $x\in \mm_i$
satisfying the following conditions:
\begin{itemize}
\item $r_i < \nm{x}$,
\item $ \exists y \in \mm_i, \nm{y} \le R_i, \nm{x} \ge \nm{y}, x\neq y ,
\frac{x-y}{\nm{x-y}}=u$.
\end{itemize}

 \begin{proposition}
\label{P:UBS}
For any $\zeta> 0$,
there exists a subsequence $\lc{\Psi_{n}}_n$ of $\lc{\psi_{n}}_n$
 so that for each $i\ge 1$, for each point $u\in U_i$, and for each $\tau \in \mt_i (u)$,
we have
 \[ \nm{\cos \tau \Psi_i (u) +\sin \tau \Psi_{i+1} (u) } \ge \frac{\sqrt{2}}{3(1+\zeta)}.\]
\end{proposition}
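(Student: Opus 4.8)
The plan is to produce the subsequence by a single diagonal extraction that turns $(\psi_n(u))_n$ into a BS-sequence simultaneously for every relevant unit vector $u$, and then to read off the stated lower bound from Proposition~\ref{P:Pab} and Remark~\ref{R:La_est}. First I would assemble the directions that must be controlled: the set $\bigcup_{i\ge 1}U_i$ is a countable union of finite sets, hence countable, so I enumerate it as $w_1,w_2,\dots$ and fix for each $j$ an index $i_j$ with $w_j\in U_{i_j}$. Because the radii are increasing, for $n\ge i_j+1$ we have $R_n\ge R_{i_j+1}>r_{i_j+1}$, so $\lin(\mB_{r_{i_j+1}})\subseteq Y_n$ and $\psi_n(w_j)$ is defined; moreover $1=\nm{w_j}\le\nm{\psi_n(w_j)}\le(1+\ve_n)$, and since $\prod_n(1+\ve_n)<\infty$ forces $\ve_n\to0$, each such sequence is bounded with $\nm{\psi_n(w_j)}\to1$.

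Next I would carry out the diagonalization. Put $S_0=\N$; given an infinite $S_{j-1}$, apply Proposition~\ref{P:P1BS} to the bounded sequence $(\psi_n(w_j))_{n\in S_{j-1},\,n\ge i_j+1}$ to get an infinite $S_j\subseteq S_{j-1}$ along which it is a BS-sequence. Choosing a diagonal $S=\{t_1<t_2<\cdots\}$ with $t_k\in S_k$, the tail $\{t_k:k\ge j\}$ lies in $S_j$ for every $j$; since a subsequence of a BS-sequence is a BS-sequence and the BS-property concerns only tails, $(\psi_{t_k}(w_j))_k$ is a BS-sequence with $\nm{\psi_{t_k}(w_j)}\to1$, for each $j$.

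Now fix $u\in U_i$ and $\tau\in\mt_i(u)$, so $\tau\in[0,\pi/2]$ and $\cos\tau,\sin\tau\ge0$. Applying Proposition~\ref{P:Pab} with $\acv=(\cos\tau,\sin\tau,0,0,\dots)$ to the BS-sequence $(\psi_{t_k}(u))_k$ and invoking Remark~\ref{R:La_est} gives $L(\acv)\ge\sqrt2/3$. Set $\eta=\frac{\sqrt2\,\zeta}{3(1+\zeta)}$, so $\sqrt2/3-\eta\ge\frac{\sqrt2}{3(1+\zeta)}$; the defining property of a BS-sequence (Definition~\ref{D:BSs}) then yields a position $\nu(u,\tau)$ — which I may take large enough that $\psi_{t_k}(u)$ is already defined for all $k\ge\nu(u,\tau)$ — such that
\[ \nm{\cos\tau\,\psi_{t_{k_1}}(u)+\sin\tau\,\psi_{t_{k_2}}(u)}\ \ge\ L(\acv)-\eta\ \ge\ \frac{\sqrt2}{3(1+\zeta)}\qquad\text{whenever }\nu(u,\tau)\le k_1<k_2. \]
As $U_i$ and each $\mt_i(u)$ are finite, $\nu_i:=\max\{\nu(u,\tau):u\in U_i,\ \tau\in\mt_i(u)\}$ is finite. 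Finally I would pick positions $k_1<k_2<\cdots$ with $k_i\ge\nu_i$ for all $i$ (possible since $S$ is infinite and each $\nu_i$ is a fixed number) and set $\Psi_i:=\psi_{t_{k_i}}$, a subsequence of $(\psi_n)_n$. For any $i\ge1$, $u\in U_i$, $\tau\in\mt_i(u)$ we have $k_i\ge\nu_i\ge\nu(u,\tau)$ and $k_i<k_{i+1}$, so the displayed inequality applies with $(k_1,k_2)=(k_i,k_{i+1})$ and gives exactly $\nm{\cos\tau\,\Psi_i(u)+\sin\tau\,\Psi_{i+1}(u)}\ge\frac{\sqrt2}{3(1+\zeta)}$.

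The main obstacle is organizational rather than conceptual: one must ensure that a single subsequence simultaneously makes $(\psi_n(u))_n$ a BS-sequence for all countably many relevant directions $u$ (handled by the diagonal argument), and that the per-level thresholds $\nu_i$ can all be cleared while keeping the chosen positions strictly increasing, so that $\Psi_i(u)$ and $\Psi_{i+1}(u)$ become genuinely the two consecutive terms of one BS-sequence — precisely the configuration to which Proposition~\ref{P:P1BS} and Definition~\ref{D:BSs} apply. Everything else (the domains of the $\psi_n$ being large enough, the norms tending to $1$, and the bound $L(\acv)\ge\sqrt2/3$) is immediate from the statements already established.
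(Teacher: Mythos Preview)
Your proof is correct and follows essentially the same approach as the paper: extract BS-subsequences via Proposition~\ref{P:P1BS}, invoke Remark~\ref{R:La_est} for the bound $L(\acv)\ge\sqrt{2}/3$, and combine by a diagonal argument. The only difference is organizational---the paper processes the levels $i$ one at a time (handling the finite set $U_i$, trimming so the inequality holds from index $1$, and then diagonalizing over levels), whereas you first diagonalize over all countably many directions $\bigcup_i U_i$ to obtain a single master subsequence and afterwards select positions $k_i\ge\nu_i$ to clear the per-level thresholds.
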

\proof

Let $U_1= \lc{u_1, u_2,  \dots, u_c}$.
By Proposition~\ref{P:P1BS}, the sequence  $\lc{\psi_{n}(u_1)}_{n}$
contains a subsequence  $\lc{\psi_{n}^0(u_1)}_{n}$
which is a BS-sequence.
Let  $\tau_j \in \mt_1 (u_1) = \lc{\tau_1, \dots, \tau_b}$ and $\acv=\lp{\cos \tau_j, \sin \tau_j, 0, \dots}$.
Since $\lc{\psi_{n}^0(u_1)}_{n}$  is a BS-sequence and from Remark~\ref{R:La_est} we have
$L(\acv) \ge \frac{\sqrt{2}}{3}$, there exists
a positive integer $\nu_j$ so that
\[ \nm{\cos \tau_j \psi_{n_1}^0 (u_1) +\sin \tau_j \psi_{n_2}^0 (u_1) } \ge \frac{\sqrt{2}}{3(1+\zeta)}\]
for all $\nu_j \le n_1 < n_2$.
Let $\nu = \max_{1\le j \le b} \nu_j$ and relabel the sequence $\lc{\psi_{n}^0}_{n= \nu}^\infty$
by $\lc{\psi^0_{1,n}}_{n= 1}^\infty$.

We now repeat the process for the sequence  $\lc{\psi^0_{1,n}(u_2)}_{n}$ to obtain a new  sequence
$\lc{\psi^0_{2,n}}_{n}$, and so on until we reach $\lc{\psi^0_{c,n}}_{n}$.
At this point, we relabel the  last sequence $\lc{\psi^0_{c,n}}_{n}$ by $\lc{\psi^1_{n}}_{n}$.

Note that for any $u\in U_1$ and for any $\tau \in \mt_1(u)$ we have
that
\[ \nm{\cos \tau \psi^1_{n_1} (u) +\sin \tau \psi^1_{n_2} (u) } \ge \frac{\sqrt{2}}{3(1+\zeta)}\]
for all $1 \le n_1 < n_2$.

After that we proceed by induction. For $i\ge 2$ we select from the sequence $\lc{\psi^{i-1}_n}_n$
a subsequence $\lc{\psi^i_n}_n$ so that for any $u\in U_i$ and for any $\tau \in \mt_i(u)$ we have
that
\[ \nm{\cos \tau \psi^i_{n_1} (u) +\sin \tau \psi^i_{n_2} (u) } \ge \frac{\sqrt{2}}{3(1+\zeta)}\]
for all $1 \le n_1 < n_2$.

 The desired subsequence  $\lc{\Psi_{n}}_n$  is the diagonal $\lc{\psi^n_{n}}_n$.
\endproof

\begin{remark}
Note that the resulting sequence $\lc{\Psi_{n}}_n$ from Proposition~\ref{P:UBS} depends on the selected parameter $\zeta$
and therefore so does the function $\Psi$ defined from this sequence via formula \eqref{E:DefPsi}.
\end{remark}
\medskip

We now combine  Proposition~\ref{P:dT_est} and Proposition~\ref{P:UBS} to obtain the bilipschitz estimates for
$\Psi$ restricted to each of the sets $\mm_i$.
\begin{proposition}
\label{P:dTmi} Fix $i\ge 1$.
For any  $x \in \mm_i$ satisfying $r_i < \nm{x}$,
\begin{equation}
\label{E:DTray_mi}
\frac{\sqrt{2}}{3(1+\zeta)} \le \frac{\nm{\Psi (x)}}{\nm{x}}  \le \sqrt{2} (1+\gamma).
\end{equation}
For any $x, y\in \mm_i$, $x\neq y$, $\nm{x}\ge \nm{y}$ satisfying  $\nm{y} \le R_i$,  $r_i < \nm{x}$,
\begin{equation}
\label{E:DTest_mi}
\frac{\sqrt{2}}{3(1+\zeta)} - \ve  \sqrt{2} (1+\gamma) \le \frac{  \nm{\Psi(x) - \Psi(y)}}{\nm{x-y}}
\le \sqrt{2}(1+\gamma)+\ve  \sqrt{2} (1+\gamma).
\end{equation}
\end{proposition}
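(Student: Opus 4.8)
The plan is to assemble three facts already established in the excerpt: the identity $\Psi|_{\mm_i}=T|_{\mm_i}$ (with $E=\Psi_i$, $F=\Psi_{i+1}$, $\tau=\tau_i$, $V=Y_{r_{i+1}}$); the pointwise formulas of Proposition~\ref{P:dT_est}; the uniform upper bound $g(\theta,u)\le\sqrt2(1+\gamma)$ of Remark~\ref{R:beta_est}, valid for all $\theta\in[0,\pi]$ and all unit $u\in V$; and the lower bound $g(\tau,u)\ge\frac{\sqrt2}{3(1+\zeta)}$ supplied by Proposition~\ref{P:UBS} for the selected subsequence, valid whenever $u\in U_i$ and $\tau\in\mt_i(u)$. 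Everything then reduces to plugging these estimates together; the one point that requires care — the step that plays the role of the main obstacle, such as there is one — is to verify, each time one wants to invoke Proposition~\ref{P:UBS}, that the direction and the angle being fed in really do belong to the sets $U_i$ and $\mt_i(u)$, including the borderline case where the second point is $0$.

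For the ray estimate \eqref{E:DTray_mi} I would fix $x\in\mm_i$ with $r_i<\nm{x}$ and set $u=x/\nm{x}$, $\sigma=\tau_i(\nm{x})$. By \eqref{E:DTray}, $\nm{\Psi(x)}/\nm{x}=g(\sigma,u)$, and the upper bound is immediate from Remark~\ref{R:beta_est}. For the lower bound I would take $y=0$ in the definitions of $U_i$ and $\mt_i$: since $0\in\mm_i$, $\nm{x}\ge 0=\nm{0}$, $x\neq 0$ and $\nm{0}\le R_i$, the vector $u$ lies in $U_i$ and $\sigma\in\mt_i(u)$; Proposition~\ref{P:UBS} then gives $g(\sigma,u)\ge\frac{\sqrt2}{3(1+\zeta)}$.

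For the two-point estimate \eqref{E:DTest_mi} I would take $x,y\in\mm_i$ with $x\neq y$, $\nm{x}\ge\nm{y}$, $\nm{y}\le R_i$, $r_i<\nm{x}$, and split into two cases. If $y=0$, then $\Psi(0)=0$ and $\nm{\Psi(x)-\Psi(y)}/\nm{x-y}=\nm{\Psi(x)}/\nm{x}$, so \eqref{E:DTest_mi} follows at once from \eqref{E:DTray_mi}, whose interval is contained in the one asserted here. If $\nm{y}>0$, I would apply \eqref{E:DTest} with $w=(x-y)/\nm{x-y}$, $\sigma=\tau_i(\nm{x})$, $\theta=\tfrac{\pi}{2}+\tfrac{\tau_i(\nm{x})+\tau_i(\nm{y})}{2}\in[0,\pi]$, and $v=y/\nm{y}$: Remark~\ref{R:beta_est} bounds both $g(\theta,v)$ and $g(\sigma,w)$ above by $\sqrt2(1+\gamma)$, which accounts for the two $\ve\sqrt2(1+\gamma)$ correction terms and for the right-hand side; and since the hypotheses on $x,y$ are precisely the membership conditions for $w\in U_i$, with $x$ together with the witness $y'=y$ showing $\sigma\in\mt_i(w)$, Proposition~\ref{P:UBS} yields $g(\sigma,w)\ge\frac{\sqrt2}{3(1+\zeta)}$. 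Substituting these bounds into \eqref{E:DTest} gives \eqref{E:DTest_mi}. The whole argument is essentially bookkeeping; the thing that could trip one up is forgetting that Proposition~\ref{P:dT_est} requires $\nm{y}>0$, which is exactly why the degenerate case $y=0$ must be peeled off and handled via the ray estimate, and why $0$ was admitted as the second point throughout the definitions of $U_i$ and $\mt_i$.
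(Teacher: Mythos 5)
Your proof is correct and follows essentially the same route as the paper: reduce to $T$ on $\mm_i$, use \eqref{E:DTray} and \eqref{E:DTest}, and pair Remark~\ref{R:beta_est} (upper bounds for $g$) with Proposition~\ref{P:UBS} (lower bound for $g(\sigma,\cdot)$). You are in fact a bit more careful than the paper, which quietly invokes \eqref{E:DTest} without noting that Proposition~\ref{P:dT_est} assumes $\nm{y}>0$; explicitly peeling off $y=0$ and reducing to the ray estimate, as you do, closes that small gap cleanly.
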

\proof
We recall  that on $\mm_i$ the map  $\Psi$  is given by the two-term formula \eqref{E:PsiMi}.
 It is clear that for  the map $T$ defined by \eqref{E:Tdef} we have
 $\left. \Psi\right|_{\mm_i}= \left. T\right|_{\mm_i}$.

Observe that \eqref{E:DTray} implies
\[ \frac{\nm{\Psi (x)}}{\nm{x}}  = g\lp{\sigma, u}
\ \mbox{ where } \sigma=\tau(\nm{x})    \mbox{ and  }  u= \frac{x}{\nm{x}}.\]
For  $x\in \mm_i$ satisfying $r_i < \nm{x}$, we have that $u\in U_i$ and $\sigma \in \mt_i(u)$.
Therefore, Proposition~\ref{P:UBS}  and Remark~\ref{R:beta_est} imply
$\frac{\sqrt{2}}{3(1+\zeta)} \le g\lp{\sigma, u} \le  \sqrt{2} (1+\gamma). $

Similarly, for $x, y\in \mm_i$, $x\neq y$, $\nm{x}\ge \nm{y}$ satisfying $\nm{y} \le R_i$,  $r_i < \nm{x}$,
Proposition~\ref{P:UBS}  and Remark~\ref{R:beta_est} imply
\[ \frac{\sqrt{2}}{3(1+\zeta)} \le g\lp{\sigma, w} \le  \sqrt{2} (1+\gamma),\]
where   $\sigma=\tau(\nm{x})$, $w =  \frac{ x -y}{\nm{x-y}}$.

With  $\theta=\frac{\pi}{2}+ \frac{\tau(\nm{x}) +\tau(\nm{y})}{2}$,
 and $v=  \frac{ y}{\nm{y}}$  we only use the estimate from above given by Remark~\ref{R:beta_est},
 namely  $g\lp{\theta, v} \le  \sqrt{2} (1+\gamma)$.
Combining the estimates for  $g\lp{\sigma, w}$ and $g\lp{\theta, v}$ in \eqref{E:DTest}
we obtain the  inequalities \eqref{E:DTest_mi}.
\endproof

\begin{remark}
\label{R:i_indep}
Note that the estimates in Proposition~\ref{P:dTmi}, which hold for $x,y \in \mm_i$, are independent of $i\ge 1$.
\end{remark}
\begin{remark}
\label{R:ann_est}
A second observation is that for $x,y \in \mA_{R_{i-1},r_i}$
much better estimates are available than the ones in Proposition~\ref{P:dTmi}
because on this set $\Psi$ coincides with $\Psi_i$ and therefore
\[ 1\le  \frac{\nm{\Psi (x)}}{\nm{x}}  \le 1+\gamma
\mbox{ and } 1\le  \frac{  \nm{\Psi(x) - \Psi(y)}}{\nm{x-y}}\le 1+\gamma. \]
\end{remark}

\subsection{End of the proof of Theorem~\ref{T:main}}

Let $x,y\in \mm$, with $\nm{x} \ge \nm{y}$.
If there exists an $i \ge 1$ so that both $x, y$ are in the same $\mm_i$, then we use  the estimates in Proposition~\ref{P:dTmi},
or the sharper estimates in Remark~\ref{R:ann_est}.

Otherwise, necessarily there exist $1\le i \le j$ so that  $y \in \mA_{R_{i-1},R_i}$ and  $x\in \mA_{r_{j+1},r_{j+2}}$.
In this case  $\|y\|\le R_i$  and $ r_{i+1} < \|x\|$.
From \eqref{E:rR_ratio} we obtain that
\begin{equation}
\label{E:delta}
 \nm{y} \le \delta \nm{x},
 \end{equation}
  and therefore
\[(1-\delta) \nm{x} = \nm{x}-\delta\nm{x} \le \nm{x}-\nm{y} \le \nm{x-y}. \]
Similarly
\[ \nm{x-y} \le \nm{x}+\nm{y} \le \nm{x}+\delta\nm{x}=(1+\delta)\nm{x}.\]
Therefore
\begin{equation}
\label{E:delo}
 \frac{ \nm{x-y} }{1+\delta} \le \nm{x} \le  \frac{ \nm{x-y} }{1-\delta}.
  \end{equation}
Next, in the inequalities
\[ \nm{\Psi(x)} -\nm{\Psi(y)}\le  \nm{\Psi(x) -\Psi(y)} \le  \nm{\Psi(x)} +\nm{\Psi(y)}, \]
we use \eqref{E:DTray_mi} for $\nm{\Psi(x)}$ and $\nm{\Psi(y)}$ to obtain
\[ \frac{\sqrt{2}}{3(1+\zeta)} \nm{x} - \sqrt{2} (1+\gamma)\nm{y}\le  \nm{\Psi(x) -\Psi(y)} \le
\sqrt{2} (1+\gamma)\nm{x}+\sqrt{2} (1+\gamma)\nm{y}. \]
Using again \eqref{E:delta} we obtain
\[ \lp{\frac{\sqrt{2}}{3(1+\zeta)}- \sqrt{2} (1+\gamma)\delta}\nm{x}\le  \nm{\Psi(x) -\Psi(y)} \le
\sqrt{2} (1+\gamma)(1+\delta)\nm{x}. \]
Further combining with \eqref{E:delo} this yields
\begin{equation}
\label{E:del_far}
\lp{\frac{\sqrt{2}}{3(1+\zeta)}- \sqrt{2} (1+\gamma)\delta}  \frac{1}{1+\delta}
\le  \frac{ \nm{\Psi(x) -\Psi(y)} }{ \nm{x-y} }
\le \sqrt{2} (1+\gamma)\frac{1+\delta}{1-\delta}.
  \end{equation}
It is clear that for a given $\epsilon>0$,
by selecting a priori sufficiently small constants $\ve$, $\delta$, $\gamma$, and $\zeta$ we get that
in both \eqref{E:DTest_mi} and \eqref{E:del_far},  the left hand sides are at least $\ds \frac{\sqrt{2}}{3\sqrt{1+\frac{\epsilon}{3}}}$,
while the right hand sides are at most $\ds \sqrt{2}\sqrt{1+\frac{\epsilon}{3}}$.
Since the ratio of the upper bound over  the lower bound is
 $(3+\epsilon)$, this concludes the proof of Theorem~\ref{T:main}.

\section{Some consequences of Theorem \ref{T:main}}\label{S:FD}

The goal of this section is to derive from Theorem \ref{T:main}
the following corollary, which contains finite determination
results for bilipschitz and coarse embeddings, as well as similar
finite determination results for other types of embeddings which
one can define. This argument is essentially known (see
\cite{Ost12}, \cite[Sections~2.3.2-2.3.3]{Ost13}), we present it
here for convenience of the readers. Necessary background is
contained in \cite[Section 2.2]{Ost13}.

\begin{corollary}\label{C:FDCoarse}
Let $(\mm,d) $ be a locally finite metric space and $X$ be a
Banach space. Suppose that $\mm$ and $X$ satisfy the following
condition: there exist non-decreasing functions
$\rho_1,\rho_2:[0,\infty)\to[0,\infty)$ (observe that this
condition implies that $\rho_1$ and $\rho_2$ have finite values)
such that $\lim_{t\to\infty}\rho_1(t)=\infty$, and for each finite
subset $\mn$ of $\mm$ there is a map $f_\mn:\mn\to X$ satisfying
the condition
\begin{equation}\label{E:CorN}\forall u,v\in \mn~ \rho_1(d_\mm(u,v))\le
\|f_\mn(u)-f_\mn(v)\|\le\rho_2(d_\mm(u,v)).\end{equation} Then,
for every $\epsilon>0$ there exists a mapping $F_\mm:\mm\to X$
satisfying
\begin{equation}\label{E:CorM}\forall u,v\in \mm~ \rho_1(d_\mm(u,v))\le
\|F_\mm(u)-F_\mm(v)\|\le(3+\epsilon)\rho_2(d_\mm(u,v)).\end{equation}
\end{corollary}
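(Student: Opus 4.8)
The plan is to route the maps $f_\mn$ through an ultrapower of $X$ and then invoke Theorem~\ref{T:main}. First I would set up the finite exhaustion: assuming $\mm\neq\emptyset$, fix a basepoint $x_0\in\mm$ and, using local finiteness, write $\mm=\bigcup_{n\ge 1}\mm_n$ with $\mm_n=\lc{x\in\mm:d_\mm(x_0,x)\le n}$ finite and increasing. For each $n$ take $f_{\mm_n}:\mm_n\to X$ as in \eqref{E:CorN}, and subtract the constant $f_{\mm_n}(x_0)$ so that $f_{\mm_n}(x_0)=0$ and $\nm{f_{\mm_n}(x)}\le\rho_2(d_\mm(x_0,x))<\infty$ for $x\in\mm_n$.

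Next I would fix a nonprincipal ultrafilter $\mathcal{U}$ on $\mathbb{N}$ and form the ultrapower $X_{\mathcal{U}}$. For each $x\in\mm$ the sequence $\lp{f_{\mm_n}(x)}_n$ (defined once $x\in\mm_n$, padded arbitrarily before that) is bounded and so represents an element $F(x)\in X_{\mathcal{U}}$; the padding is irrelevant since cofinite sets lie in $\mathcal{U}$. For $u,v\in\mm$ and all large $n$ both points lie in $\mm_n$, so the ultralimit of \eqref{E:CorN} gives
\[\rho_1(d_\mm(u,v))\le\nm{F(u)-F(v)}_{X_{\mathcal{U}}}\le\rho_2(d_\mm(u,v))\qquad\text{for all }u,v\in\mm.\]
Hence $F:\mm\to X_{\mathcal{U}}$ already meets the lower bound of \eqref{E:CorM} and the $\rho_2$ upper bound, but maps into the wrong space.

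Then I would set $N=F(\mm)\subseteq X_{\mathcal{U}}$ with the induced metric and check that $N$ is locally finite: if $\nm{F(x)-F(u)}\le r$ then $\rho_1(d_\mm(x,u))\le r$, and since $\rho_1$ is non-decreasing with $\rho_1(t)\to\infty$ there is $R=R(r)$ with $d_\mm(x,u)\le R$; local finiteness of $\mm$ then bounds the number of such $u$, hence of such $F(u)$. Since $X_{\mathcal{U}}$ is finitely representable in $X$ (a standard property of ultrapowers; see the background in \cite[Section~2.2]{Ost13}), I can split into two cases. If $X$ is finite-dimensional then $X_{\mathcal{U}}=X$ and $F$ itself already satisfies \eqref{E:CorM} (in fact with $\rho_2$ in place of $(3+\epsilon)\rho_2$). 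If $X$ is infinite-dimensional then so is $X_{\mathcal{U}}$, and Theorem~\ref{T:main} applies to the locally finite set $N\subseteq X_{\mathcal{U}}$: for the given $\epsilon>0$ it produces a bilipschitz embedding $g:N\to X$ which, after rescaling, obeys $\nm{p-q}_{X_{\mathcal{U}}}\le\nm{g(p)-g(q)}_X\le(3+\epsilon)\nm{p-q}_{X_{\mathcal{U}}}$ for $p,q\in N$. Composing, $F_\mm:=g\circ F$ satisfies, for all $u,v\in\mm$,
\[\rho_1(d_\mm(u,v))\le\nm{F(u)-F(v)}_{X_{\mathcal{U}}}\le\nm{F_\mm(u)-F_\mm(v)}_X\]
and
\[\nm{F_\mm(u)-F_\mm(v)}_X\le(3+\epsilon)\nm{F(u)-F(v)}_{X_{\mathcal{U}}}\le(3+\epsilon)\rho_2(d_\mm(u,v)),\]
which together are exactly \eqref{E:CorM}; this remains correct when $F$ is not injective, since $F(u)=F(v)$ forces $\rho_1(d_\mm(u,v))=0$.

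I expect essentially all of this to be routine manipulation of the triangle inequality and the defining estimates. The one genuinely substantive point --- and the only place where the coarse growth hypothesis $\rho_1(t)\to\infty$ is really needed --- is the verification that $N=F(\mm)$ is locally finite, which is precisely what makes Theorem~\ref{T:main} applicable; after that the theorem does the heavy lifting. The other nontrivial ingredient is the classical fact that an ultrapower is finitely representable in the base space, which I would simply cite.
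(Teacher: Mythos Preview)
Your proposal is correct and follows essentially the same route as the paper: push the family $\{f_\mn\}$ into an ultrapower of $X$ to obtain a single map satisfying \eqref{E:CorN} on all of $\mm$, check that the image is locally finite using $\rho_1(t)\to\infty$, invoke finite representability of the ultrapower in $X$, and apply Theorem~\ref{T:main}. The only cosmetic differences are that the paper indexes the ultrafilter by all finite subsets of $\mm$ containing a basepoint rather than by your countable exhaustion $\{\mm_n\}$, and that you explicitly separate out the finite-dimensional case (where $X_{\mathcal U}\cong X$ and no appeal to Theorem~\ref{T:main} is needed) --- a point the paper leaves implicit.
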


\begin{proof} Pick $O\in\mm$, restrict attention only to finite sets $\mn\ni
O$ and denote by $\ms$ the set of all such $\mn$. Assume that
$f_\mn(O)=0$ for each $\mn\in\ms$. Let $\mf$ be the filter generated
by $\mr=\{\{\mcp: \mcp\supset\mn\}:\mn\in\ms\}$ and $\mju$ be an
ultrafilter on $\ms$ containing $\mr$. Then the ultraproduct of
maps $f_\mn$ determines a map $f_\mm:\mm\to X^\mju$, where
$X^\mju$ is the ultrapower of $X$. This map satisfies
\begin{equation}\label{E:CorMno3}\forall u,v\in \mm~ \rho_1(d_\mm(u,v))\le
\|f_\mm(u)-f_\mm(v)\|\le\rho_2(d_\mm(u,v)).\end{equation}

The condition $\lim_{t\to\infty}\rho_1(t)=\infty$ implies that the
image $\mm':=f_\mm(\mm)$ is a locally finite subset in $X^\mju$.
As is well-known (\cite[Proposition 2.31]{Ost13}), $X^\mju$ is
finitely represented in $X$. By Theorem \ref{T:main}, there is a
map $g:\mm'\to X$ satisfying
\begin{equation}\label{E:Cor3}\forall u',v'\in \mm'~ \|u'-v'\|_{X^\mju}\le
\|g(u')-g(v')\|\le(3+\epsilon)\|u'-v'\|_{X^\mju}.\end{equation}

It is easy to see that the composition $F_\mm=gf_\mm$ is the
desired map.
\end{proof}

Corollary \ref{C:FDCoarse} shows that coarse embeddability is
finitely determined. If we apply it to more narrow classes of
functions satisfying the condition: positive real multiples of a
function are also in the class, we get that that the corresponding
types of embeddability are also finitely determined.

\section{Related open problems}

In our opinion, the most important open problem related to the topic of
this paper is:

\begin{problem} Can one replace $(3+\epsilon)$ by $(1+\epsilon)$
in Theorem \ref{T:main}?
\end{problem}

This problem has been already raised in \cite[Problem 5.1]{OO19}.
Both positive or negative answer would be very interesting.

Another interesting problem is mentioned in Remark \ref{R:Isom}.
For that problem it is not clear whether it is realistic to
describe completely the corresponding class of Banach spaces.

\subsection*{Acknowledgement}

The first-named author wants to thank his home institution, St.
John's University, for providing him a research leave in Spring
2023. The second-named author gratefully acknowledges the support
by the National Science Foundation grant NSF DMS-1953773 and
St. John's University Summer 2023 Support of Research program.


\renewcommand{\refname}{\section{References}}

\textsc{Department of Mathematics and Computer Science, St. John's
University, 8000 Utopia Parkway, Queens, NY 11439, USA} \par
  \textit{E-mail address}: \texttt{catrinaf@stjohns.edu} \par
  \medskip

\textsc{Department of Mathematics and Computer Science, St. John's
University, 8000 Utopia Parkway, Queens, NY 11439, USA} \par
  \textit{E-mail address}: \texttt{ostrovsm@stjohns.edu} \par


\begin{thebibliography}{WWW12}

\bibitem[Bau07]{Bau07} F.~Baudier, Metrical characterization of
super-reflexivity and linear type of Banach spaces, {\it Archiv
Math.},  {\bf  89}  (2007),  no. 5, 419--429.

\bibitem[Bau12]{Bau12} F.~Baudier,  Embeddings of proper metric spaces into Banach
spaces, {\it Houston J. Math.}, {\bf 38} (2012), no. 1, 209--223.

\bibitem[Bau22]{Bau22} F.\,P.~Baudier,  Barycentric gluing and geometry of stable metrics. {\it Rev. R. Acad.
Cienc. Exactas F\'is. Nat.} Ser. A Mat. RACSAM {\bf 116} (2022),
no. 1, Paper No. 37, 48 pp.

\bibitem[BL08]{BL08} F.~Baudier, G.~Lancien, Embeddings of
locally finite metric spaces into Banach spaces, {\it Proc. Amer.
Math. Soc.}, {\bf 136} (2008), 1029--1033.

\bibitem[BL15]{BL15} F.~Baudier, G.~Lancien, Tight embeddability of proper and stable metric spaces. {\it Anal.
Geom. Metr. Spaces} {\bf 3} (2015), no. 1, 140--156.

\bibitem[BL00]{BL00} Y. Benyamini, J. Lindenstrauss, {\it Geometric Nonlinear Functional Analysis}, volume 1,
Providence, R.I., AMS, 2000.

\bibitem[Bou86]{Bou86} J.~Bourgain, The metrical interpretation
of superreflexivity in Banach spaces, {\it Israel J. Math.}, {\bf
56} (1986), no. 2, 222--230.

\bibitem[BS74]{BS74} A.~Brunel, L.~ Sucheston, On B-convex Banach spaces. {\it Math. Systems Theory}, {\bf 7} (1974), no. 4, 294--299.

\bibitem[COO22+]{COO22+} F.~Catrina, S.~Ostrovska, M.\,I.~Ostrovskii, Dvoretzky-type theorem for locally finite subsets of a Hilbert
space;  {\tt arXiv:2203.00166}.

\bibitem[DG07]{DG07} M.~Dadarlat, E.~Guentner, Uniform
embeddability of relatively hyperbolic groups. {\it J. Reine
Angew. Math.} {\bf 612} (2007), 1--15.

\bibitem[KL08]{KL08} N.\,J.~Kalton, G.~Lancien, Best constants for Lipschitz embeddings of
metric spaces into $c_0$, {\it Fund. Math.}, {\bf 199} (2008),
249--272.

\bibitem[MM16]{MM16} K.~Makarychev, Y.~Makarychev, Union of Euclidean metric spaces is
Euclidean, {\it Discrete Analysis} 2016:14, 14pp.; {\tt
arXiv:1602.08426}.

\bibitem[MN13]{MN13} M.~Mendel, A.~Naor, Ultrametric skeletons. {\it Proc. Natl. Acad. Sci. USA} {\bf 110} (2013), no.
48, 19256--19262.

\bibitem[NY18]{NY18} A.~Naor, R.~Young, Vertical perimeter versus horizontal perimeter. {\it Ann. of Math.} (2) {\bf 188} (2018), no. 1, 171--279.

\bibitem[NY22]{NY22} A.~Naor, R.~Young, Foliated corona decompositions. {\it Acta Math.} {\bf 228} (2022), no. 2, 55--200.


\bibitem[Net22+]{Net22+} F.~Netillard, Almost bi-lipschitz embeddings and proper subsets
of a Banach space - an extension of a theorem by M.I. Ostrovskii;
{\tt arXiv:2209.11828}.


\bibitem[Ost22]{Ost22}  S.~Ostrovska,
Distortion in the metric characterization of superreflexivity in
terms of the infinite binary tree. {\it Math. Inequal. Appl.} {\bf
25} (2022), no. 2, 421--431.

\bibitem[OO19]{OO19} S.~Ostrovska, M.\,I.~Ostrovskii,
Distortion in the finite determination result for embeddings of
locally finite metric spaces into Banach spaces, {\it Glasg. Math.
J.} {\bf 61} (2019), no. 1, 33--47.

\bibitem[OO19a]{OO19a} S.~Ostrovska, M.\,I.~Ostrovskii, On embeddings of locally finite metric spaces into
$\ell_p$, {\it  J. Math. Anal. Appl.} {\bf 474} (2019), 666--673.

\bibitem[OO19b]{OO19b} S.~Ostrovska, M.\,I.~Ostrovskii,
Generalized transportation cost spaces, {\it Mediterr. J. Math.}
{\bf 16} (2019), no. 6, Paper No. 157.

\bibitem[Ost06]{Ost06} M.\,I.~Ostrovskii, Coarse embeddings of locally
finite metric spaces into Banach spaces without cotype, {\it C. R.
Acad. Bulgare Sci.}, {\bf 59} (2006), no. 11, 1113--1116.

\bibitem[Ost09]{Ost09} M.\,I.~Ostrovskii, Coarse embeddability into Banach spaces, {\it Topology Proc.},
{\bf 33} (2009), 163--183.

\bibitem[Ost12]{Ost12} M.\,I.~Ostrovskii,
Embeddability of locally finite metric spaces into Banach spaces
is finitely determined, {\it Proc. Amer. Math. Soc.}, {\bf 140}
(2012), 2721--2730.

\bibitem[Ost13]{Ost13} M.\,I.~Ostrovskii, {\it Metric Embeddings: Bilipschitz and Coarse Embeddings into Banach Spaces},
de Gruyter Studies in Mathematics, {\bf 49}. Walter de Gruyter \&\
Co., Berlin, 2013.

\bibitem[OR22]{OR22}  M.\,I.~Ostrovskii, B.~Randrianantoanina, On
$L_1$-embeddability of unions of $L_1$-embeddable metric spaces
and of twisted unions of hypercubes. {\it Anal. Geom. Metr.
Spaces} {\bf 10} (2022), no. 1, 313--329.


\end{thebibliography}
\end{document}